\newcommand{\barint}{
         \rule[.036in]{.12in}{.009in}\kern-.16in
          \displaystyle\int  }
\def\R{{\mathbb{R}}}
\def\r{{\mathbb{R}}}
\newcommand{\RS}{\mathrm{R}}
\newcommand{\BS}{\mathfrak{B}}
\newtheorem{theo}{\bf Theorem} 
\newtheorem{coro}{\bf Corollary}[section]
\newtheorem{lem}{\bf Lemma}[section]
\newtheorem{rem}{\bf Remark}[section]
\newtheorem{ex}{\bf Example}[section]
\newtheorem{prop}{\bf Proposition}[section]
\newcommand{\wt}{\widetilde}
\newcommand{\vp}{{\varphi}}
\newcommand{\vt}{\vartheta}
\newcommand{\dv}{\mathrm{div}}
\newcommand{\ve}{\varepsilon}
\newcommand{\rn}{{\mathbb{R}^{N}}}
\newcommand{\E}{\mathcal{E}}
\newcommand{\BSD}{\BS}
\newcommand{\I}{\mathcal{I}}
\date{} 
\title[Inequalities and applications to doubly nonlinear diffusion equations]{Functional inequalities and applications\\ to doubly nonlinear diffusion equations} 
\author[Chlebicka]{Iwona Chlebicka}\address{Iwona Chlebicka\\Faculty of Mathematics, Informatics and Mechanics, University of Warsaw\\ul. Banacha 2, 02-097 Warsaw, Poland} \email{\texttt{i.chlebicka@mimuw.edu.pl}}
\author[Simonov]{Nikita Simonov}\address{Nikita Simonov\\ Ceremade, UMR CNRS n$^\circ$~7534, Universit\'e Paris-Dauphine, PSL Research University, Place de Lattre de Tassigny, 75775 Paris Cedex~16, France.}\email{\texttt{simonov@ceremade.dauphine.fr}}
\thanks{The research of I. Chlebicka was partially funded by NCN Grant 2019/34/E/ST1/00120. N.~Simonov was partially supported by the Spanish Ministry of Science and Innovation, through the  FPI-grant BES-2015-072962, associated to the project MTM2014-52240-P (Ministry of Science and Innovation, Spain), by the project MTM2017-85757-P (Ministry of Science and Innovation, Spain), by the E.U. H2020 MSCA programme, grant agreement 777822, by the Project EFI (ANR-17-CE40-0030) of the French National Research Agency (ANR), and by the DIM Math-Innov of the Region \^{I}le-de-France.}
\begin{document}

\maketitle

\begin{abstract}

We study weighted inequalities of Hardy and Hardy--Poincar\'e type and find necessary and sufficient conditions on the weights so that the considered inequalities hold.  
Examples with the optimal constants are shown. Such inequalities are then used to quantify the convergence rate of solutions to doubly nonlinear fast diffusion equation towards the Barenblatt profile.
\end{abstract}


\section{Introduction}

We investigate functional inequalities of Hardy and Poincar\'e type. Our main objective is to provide new constructive methods of derivation of such inequalities with explicit constants that are optimal in some cases. Our study starts with finding necessary and sufficient conditions on the weights so that the following \emph{Poincar\'e inequality} on the real line is valid for any compactly supported $f\in W^{1,\infty}(\R)$
\[\label{P}\tag{P}
\int_{\R} |f-(f)_{w_1}|^q w_1(s)\, ds \le C_P \int_{\R} |f'|^q w_2(s)\, ds\,, \qquad\text{where $1<q<\infty$,}
\]
 $(f)_{w_1}$ is a weighted average with respect to $w_1$, and $C_P=C_P(q,w_1,w_2)>0$, see Theorem~\ref{theo:Poin-line-const}. 
Having \emph{Poincar\'e inequality} on the real line,  we prove some compatibility conditions to be necessary and sufficient for \emph{Hardy--Poincar\'e inequality on $\rn$}, $N>2$, of a form
\[\tag{HP}\label{HP}
\int_\rn \ |\vp-\overline{\vp}|^q   w_1(x)\,dx\leq C_{HP} \int_\rn \ |\nabla \vp|^q  w_2(x)\,dx, \qquad\text{where $1<q<N$,}\]
to hold for all compactly supported $\vp\in W^{1,\infty}(\rn)$, where $\overline \vp=\frac{1}{H_1}\int_\rn \vp w_1(x)\,dx$, and $C_{HP}=C_{HP}(q,w_1,w_2)>0$, see Theorem~\ref{theo:HP-q}. Our next result is Theorem~\ref{theo:inq-sgn-Lap-q} providing a constructive PDE-based method of obtaining admissible weights  to the following \emph{Hardy inequality}
\[\tag{H}\label{H}
\int_\rn \ |\vp|^q   w_1(x)\,dx\leq C_{H} \int_\rn \ |\nabla \vp|^q  w_2 (x)\,dx, \qquad\text{where $1<q<\infty$,}\]
 for every compactly supported $\vp\in W^{1,\infty}(\rn)$ and $C_H=C_H(q,w_1,w_2)>0$.
 
The name \emph{Hardy-Poincar\'e inequalities} has been introduced in~\cite{bbdgv1}. In that paper the authors consider a family of inequalities which interpolates between the classical Hardy and Poincar\'e inequalities, see~\cite[Proposition 5]{bbdgv1}.  Since then, the name Hardy-Poincar\'e has been popularized and several authors used it, see for instance~\cite{bbdgv2,Agueh2009,Dolbeault2011, Dolbeault2013,bonforte2021stability,S2}. In the present paper, we preferred to distinguish between two different situations: when the left-hand side weight is integrable (Hardy-Poincar\'e inequality~\eqref{HP}) and when it is not true (Hardy inequality~\eqref{H}).
\newline{}

The investigation of the inequalities mentioned above is motivated by the study of nonlinear diffusion equations where they play a critical role in understanding the~asymptotic behaviour of solutions to Cauchy problems. 
The constants in the inequalities dictate the convergence rates of a~solution towards a~self-similar one. 
We apply such an inequality to investigate the long-time asymptotics of solutions to the Cauchy problem
\begin{equation}\label{dnle}
u_t=\mathrm{div}\left(|\nabla u^m|^{p-2}\,\nabla u^m\right)\,,\quad u(0,x)=u_0(x)\,.
\end{equation}

We point out that the inequalities~\eqref{HP} and~\eqref{P} retrieve the Hardy--Poincar\'e and Hardy inequalities applied in~\cite{bbdgv2,Bonforte2009} in order to obtain precise rates of convergence of solutions to~\eqref{dnle} for $p=2$ and $m<1$. Inequalities \eqref{P}, \eqref{HP}, and \eqref{H} are far better understood for $q=2$ or in bounded domains, cf.~\cite{sob-log-overview,BR,Bobkov99,Ghoussoub2013}. We are interested in providing them for general $q$ and under possibly general assumptions on the weights. 
Despite our ultimate goal is to provide a handy tool for analysis of an evolution equation,  let us mention that such inequalities play an important role in other branches of analysis. Hardy-type inequalities are used in functional analysis, probability theory, interpolation theory, and PDEs \cite{BaGo,BR,bbdgv2,ChZa,Chua,GaPa,GuW,Huang2021,KPP,PoMi,VaZu}. We refer to \cite{sob-log-overview,BR,Bobkov99,Miclo} for information on the relation between~\eqref{H},~\eqref{HP} and Log-Sobolev inequalities. The latter are of great interest in geometry in finite and infinite dimension, probability theory, and statistical mechanics.\newline{}

In order to present our first result let us define the \emph{median} of the measure $w_1(s)\,ds$ being a~point $\eta\in\R$ such that $\int_{-\infty}^\eta w_1(s)\,ds=\|w_1\|_{L^1(\R)}/2$. For any $m\in\R$ we define
\begin{equation}\begin{split}
\label{Muckenhoupt.quantities}
B_m^{+}:=\sup_{t\ge m}\left\{\int_{t}^\infty w_1(s)\,ds\,\left(\int_{m}^t w_2^\frac{1}{1-q}(s) \,ds\right)^{q-1} \right\},\\  B_m^{-}:=\sup_{t\le m}\left\{\int_{-\infty}^t w_1(s)\,ds\,\left(\int_{t}^m w_2^\frac{1}{1-q}(s) \,ds \right)^{q-1}\right\}. 
\end{split}\end{equation}
The following theorem characterizes the measures which satisfy inequality~\eqref{P}. 
\begin{theo}[Poincar\'e inequality on $\R$]\label{theo:Poin-line-const}
Let $1<q<\infty$, $0\le w_1,w_2$, $\|w_1\|_{L^1(\R)}<\infty$, and let $\eta$ be a median of the measure $w_1(s)\,ds$. Then inequality~\eqref{P} holds if and only if $B_{\eta}^{+}, B_{\eta}^{-} <\infty$. Moreover, the optimal constant $C_P$ satisfies
\[
\frac{(2^\frac{q-1}{q}-1)^q}{2^{q-1}}\, \max\left(B_\eta^+,B_\eta^-\right) \le C_P \le \frac {(2q)^q}{(q-1)^{q-1}}\, \max\left(B_\eta^+,B_\eta^-\right)
\]
\end{theo}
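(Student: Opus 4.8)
The plan is to reduce the two-sided Poincaré inequality to two one-sided Hardy-type inequalities on half-lines, exploiting the median $\eta$, and then invoke the classical weighted Hardy inequality of Muckenhoupt. First I would fix a median $\eta$ and, without loss of generality, take the constant $c=(f)_{w_1}$ to be the value of $f$ at $\eta$ after a reduction argument; more precisely, I would show that for any competitor $f$ the left-hand side $\int_\R |f-(f)_{w_1}|^q w_1$ is, up to a constant depending only on $q$, comparable to $\inf_{c\in\R}\int_\R|f-c|^q w_1$, and the latter infimum is comparable (again up to a $q$-dependent constant) to $\int_\R |f-f(\eta)|^q w_1$. This is where the median enters: splitting the mass of $w_1\,ds$ into two equal halves on $(-\infty,\eta]$ and $[\eta,\infty)$ lets one control $|f(\eta)-c|^q\,\|w_1\|_{L^1}$ by the oscillation of $f$ on each half, since on each half $w_1$ carries at least half the total mass.

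Next I would treat the two half-lines separately and symmetrically. On $[\eta,\infty)$, writing $g=f-f(\eta)$ so that $g(\eta)=0$ and $g(t)=\int_\eta^t f'(s)\,ds$, the inequality $\int_\eta^\infty |g|^q w_1 \le C\int_\eta^\infty |f'|^q w_2$ is exactly the one-dimensional weighted Hardy inequality, whose sharp characterization is Muckenhoupt's theorem: it holds if and only if $B_\eta^+<\infty$ as defined in~\eqref{Muckenhoupt.quantities}, with the optimal constant $A^+$ satisfying $B_\eta^+ \le A^+ \le q^q/(q-1)^{q-1}\,B_\eta^+$ (the standard two-sided bound; here one must be careful that Muckenhoupt's classical statement gives $q' = q/(q-1)$ in the exponent, matching the form written). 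The half-line $(-\infty,\eta]$ is handled by the reflection $s\mapsto -s$, producing the analogous condition $B_\eta^-<\infty$. Combining the two halves and the reduction from the first paragraph gives inequality~\eqref{P} with $C_P \lesssim_q \max(B_\eta^+, B_\eta^-)$, yielding the upper bound after tracking the numerical constants through the two reductions and Muckenhoupt's constant.

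For the necessity and the lower bound on $C_P$, I would test~\eqref{P} with explicit truncations. Given $t>\eta$, take $f$ to be (a Lipschitz regularization of) the function that is $0$ on $(-\infty,\eta]$, equals $\int_\eta^{\min(s,t)} w_2^{1/(1-q)}$ suitably normalized, i.e.\ the extremal-type profile $f(s)=\int_\eta^{\min(s,t)} w_2^{1/(1-q)}(\tau)\,d\tau$ — this makes the right-hand side finite and computable. Plugging into~\eqref{P}, estimating $|f-(f)_{w_1}|$ from below on $[t,\infty)$ (using that $f$ is constant there and that the average $(f)_{w_1}$ cannot exceed roughly half that constant, again by the median property), and optimizing over $t\ge\eta$ recovers $C_P \gtrsim_q B_\eta^+$; symmetrically $C_P\gtrsim_q B_\eta^-$, giving the stated lower bound with the explicit constant $(2^{(q-1)/q}-1)^q/2^{q-1}$.

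The main obstacle I expect is the bookkeeping in the reduction step: passing from $\int|f-(f)_{w_1}|^q w_1$ to $\int|f-f(\eta)|^q w_1$ and back incurs $q$-dependent factors (via the elementary inequality $|a+b|^q\le 2^{q-1}(|a|^q+|b|^q)$ and the median-balancing estimate), and getting these to assemble into precisely the constants $(2^{(q-1)/q}-1)^q/2^{q-1}$ and $(2q)^q/(q-1)^{q-1}$ rather than merely some constant requires choosing the reduction inequalities sharply — in particular using that the best constant in $\inf_c\int|f-c|^q w_1 \le \int|f-f(\eta)|^q w_1$ and the behavior of $(f)_{w_1}$ versus $f(\eta)$ must be exploited with the right exponent splitting. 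The Hardy/Muckenhoupt part is essentially a black box once the one-sided reduction is in place.
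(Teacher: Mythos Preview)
Your proposal is correct and follows essentially the same route as the paper: for the upper bound, pass from $(f)_{w_1}$ to $f(\eta)$ at the cost of a factor $2^q$, split at the median into two half-lines, and apply Muckenhoupt's theorem on each; for the lower bound, test with a function supported on one side of $\eta$ and use the median property (via H\"older, giving $(F)_{w_1}\le 2^{-(q-1)/q}\|F\|_{L^q(w_1)}$) to control the average. The only cosmetic difference is that the paper invokes an abstract near-extremizer $f_\varepsilon$ for the one-sided Hardy constant $A_\eta^+$, whereas you write down the explicit Muckenhoupt profile $s\mapsto\int_\eta^{\min(s,t)} w_2^{1/(1-q)}$; these lead to the same numerical lower bound.
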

\noindent In the case $q=2$, the result was proven in~\cite[Théorème 6.2.2]{sob-log-overview} and~\cite{Bobkov99}. See also~\cite{Miclo} for a~discussion on the bounds on the optimal constant if $q=2$. The method of the proof is inspired by the above mentioned papers. 
\newline{}

As an application of Theorem~\ref{theo:Poin-line-const}  we prove our next result -- sufficient and necessary conditions for inequality~\eqref{HP} to hold. In order to state our result we define for any $m>0$ and for any function $0\le h\in L^1([0, \infty))$ the quantity
\begin{equation}\begin{split}\label{H2-finite}
H_2(m) :=&\max\left\{\sup_{t>m}\left(\int_t^\infty r^{N-1}h(r)\,dr\left[\int_m^t {(r^{N-1+q}h(r))^{-\frac{1}{q-1}}dr}\right]^{q-1}\right),\right.\\
&\quad\quad\ \ \left.\sup_{t\in(0,m)}\left(\int_0^t r^{N-1}h(r)\,dr\left[\int_t^m {(r^{N-1+q}h(r))^{-\frac{1}{q-1}}dr}\right]^{q-1}\right)\right\}.\end{split}
\end{equation}
With a slight abuse of notation, for a nonnegative, radial $N$-dimensional measure $h(r)r^{N-1}dr$ we define a \emph{median} to be $\eta\in[0, \infty)$ such that $\int_{0}^\eta h(r)r^{N-1}dr=\|h\|_{L^1(\R^N)}/2$. Our result reads as follows.
\begin{theo}[General Hardy--Poincar\'e inequality]\label{theo:HP-q}  Let $1<q\le 2$ if $N=2$ and $1<q<N$ if $N\ge3$. Assume $h:[0,\infty)\to[0,\infty)$ is such that
\begin{equation}
    \label{H1-finite}
    H_1:=\|h\|_{L^1(\R^N)}<\infty\,,
\end{equation}
and let $\eta$ be a median of the measure $ h(r)r^{N-1}\,dr$, $w_1(x)=h(|x|)$, and $w_2(x)=|x|^qh(|x|)$.\\
Then~inequality\eqref{HP} holds if and only if  $H_2(\eta)<\infty$. Moreover, the optimal constant $C_{HP}$  satisfies 
\begin{equation}
    \label{CHP-est}
\frac{(2^\frac{q-1}{q}-1)^q}{2^{q-1}}\,H_2(\eta)\le C_{HP}\le \,\frac{(2q)^q}{(q-1)^{q-1}} \,H_2(\eta).
\end{equation}
\end{theo}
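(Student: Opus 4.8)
The plan is to deduce \eqref{HP} from the one–dimensional Poincaré inequality of Theorem~\ref{theo:Poin-line-const} by passing to polar coordinates and then removing the angular variable. Writing $x=r\omega$, $r=|x|$, $\omega\in S^{N-1}$, for a radial competitor $\vp(x)=f(|x|)$ one has $\int_\rn|\vp-\overline\vp|^q w_1\,dx=|S^{N-1}|\int_0^\infty|f(r)-\overline f|^q r^{N-1}h(r)\,dr$ and $\int_\rn|\nabla\vp|^q w_2\,dx=|S^{N-1}|\int_0^\infty|f'(r)|^q r^{N-1+q}h(r)\,dr$, with $\overline\vp=\overline f$ the average of $f$ against $r^{N-1}h(r)\,dr$. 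So on radial functions \eqref{HP} is exactly \eqref{P} on the half-line with weights $W_1(r)=r^{N-1}h(r)$ and $W_2(r)=r^{N-1+q}h(r)$. Transporting it to $\R$ by an increasing bijection $\R\to(0,\infty)$ — under which the weighted average and the Muckenhoupt quantities in \eqref{Muckenhoupt.quantities} are unchanged — Theorem~\ref{theo:Poin-line-const} applies: the median of $W_1(r)\,dr$ is $\eta$, and matching \eqref{Muckenhoupt.quantities} against \eqref{H2-finite} gives $\max(B_\eta^+,B_\eta^-)=H_2(\eta)$. Hence \eqref{HP} restricted to radial $\vp$ holds iff $H_2(\eta)<\infty$ and its best constant already satisfies \eqref{CHP-est}; testing the full \eqref{HP} against radial competitors then yields both the ``only if'' part of the theorem and the lower bound in \eqref{CHP-est}.

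For the ``if'' part with a general compactly supported $\vp\in W^{1,\infty}(\rn)$, I would decompose $\vp=\Phi+\psi$, where $\Phi(x):=\frac{1}{|S^{N-1}|}\int_{S^{N-1}}\vp(|x|\omega)\,d\omega$ (a radial function) and $\psi:=\vp-\Phi$; then $\psi(r\,\cdot)$ has zero mean on every sphere and, $w_1$ being radial, $\overline\vp=\overline\Phi$. The radial part $\int_\rn|\Phi-\overline\Phi|^q w_1$ is handled by the step above applied to $\Phi$, together with Jensen's inequality $|\nabla\Phi(x)|^q\le\frac{1}{|S^{N-1}|}\int_{S^{N-1}}|\nabla\vp(|x|\omega)|^q d\omega$, which yields $\int_\rn|\nabla\Phi|^q w_2\le\int_\rn|\nabla\vp|^q w_2$ with no loss. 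For the fluctuation $\psi$ one uses, on each sphere, the Poincaré inequality on $S^{N-1}$ (licit since $\psi(r\,\cdot)$ has zero mean) in the form $\int_{S^{N-1}}|\psi(r\omega)|^q d\omega\le C(N,q)\,r^q\int_{S^{N-1}}|\nabla\psi(r\omega)|^q d\omega$, using $|\nabla_{S^{N-1}}[\psi(r\,\cdot)]|=r|(\nabla\psi)_{\mathrm{tan}}|\le r|\nabla\psi|$; integrating against $r^{N-1}h(r)\,dr$ then gives $\int_\rn|\psi|^q w_1\le C(N,q)\int_\rn|\nabla\vp|^q w_2$. This last estimate is in essence a Hardy inequality near the origin with the weight pair $\bigl(h(|x|),\,|x|^q h(|x|)\bigr)$, cf.\ \eqref{H} and Theorem~\ref{theo:inq-sgn-Lap-q}, and it is precisely this inequality that is available only for $q<N$ (and, in the borderline case, $q=2=N$): this is where the dimensional restriction enters. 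Adding the two contributions proves \eqref{HP}.

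The hard part is the \emph{sharpness} of the second step. The crude decomposition above yields \eqref{HP}, but not with the constant of \eqref{CHP-est}, which asks for the explicit value $\frac{(2q)^q}{(q-1)^{q-1}}H_2(\eta)$ — the very same as in Theorem~\ref{theo:Poin-line-const}. Reaching it requires organising the argument so that the angular fluctuation never costs more than the radial direction; one natural route is to compare $|\vp-\overline\vp|$ with its decreasing rearrangement with respect to the measure $w_1\,dx$, viewed as a function of $|x|$, and to combine this with a Pólya–Szegő-type bound for $w_2$, which — the mean having already been spent — collapses the whole inequality to the one-dimensional problem of Theorem~\ref{theo:Poin-line-const} without loss in the constant, provided $q<N$. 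Carrying out this reduction while keeping exact track of the Muckenhoupt constants, and settling the borderline case $q=2=N$, is, I expect, the technical heart of the proof.
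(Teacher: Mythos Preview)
Your architecture matches the paper's: pass to polar coordinates, use Theorem~\ref{theo:Poin-line-const} for the radial variable and the spherical Poincar\'e inequality (Lemma~\ref{lem:sph-Poin}) for the angular one. Testing with radial competitors gives the ``only if'' direction and the lower bound in~\eqref{CHP-est}, exactly as you say; this is how the paper obtains them too, since the bounds in~\eqref{CHP-est} are transcribed verbatim from Theorem~\ref{theo:Poin-line-const}. There is one cosmetic difference in the splitting. You take $\Phi$ to be the \emph{angular} average (a radial function) and apply the one--dimensional Poincar\'e to $\Phi$ and the spherical Poincar\'e to $\psi=\vp-\Phi$ on each sphere. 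The paper does the dual: it defines the \emph{radial} average $\wt f(\vt)=\tfrac{|S^{N-1}|}{H_1}\int_0^\infty f(r,\vt)\,r^{N-1}h(r)\,dr$ (a function on $S^{N-1}$), applies the one--dimensional Poincar\'e to $r\mapsto f(r,\vt)$ for each fixed $\vt$, and then the spherical Poincar\'e to $\wt f$; see Proposition~\ref{prop:radsuff}. Both routes yield the same estimate with the same quality of constant.

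Where you go astray is the last paragraph. There is no P\'olya--Szeg\H{o} or rearrangement step, and none would be available for a general radial weight $h$. The paper's proof of sufficiency is precisely your ``crude decomposition''; nothing sharper is done. The upper constant announced in~\eqref{CHP-est} is simply the one--dimensional upper bound of Theorem~\ref{theo:Poin-line-const}, while the angular piece (your $\psi$, the paper's $I_2$) is controlled by an absolute constant depending only on $N,q,H_1,C_{\rm sph}$ --- see the explicit estimate $C_{HP}\le 2^{q-1}\max\{C_1,\,H_1^q C_{\rm sph}|S^{N-1}|^{-q}\}$ in Proposition~\ref{prop:radsuff}. So the ``technical heart'' you anticipate is not there: drop the rearrangement speculation and you already have the paper's argument.
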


Let us concentrate on inequalities of Hardy type~\eqref{H}. It is known that such inequalities can be proven with weights depending on solutions to elliptic PDE, see~\cite{BFT2004,DA2,DA,DA3,GM,S1,S2}. The principal idea is to re-interpret  certain versions of Caccioppoli estimate for a solution as Hardy inequalities for test functions. In our result weights $w_1,w_2$ in~\eqref{H} are determined by the means of the gradient and the $\theta$-Laplacian of a sub- or super-$\theta$-harmonic function. Let us define the operator \begin{equation}
    \label{th-Lap}\Delta_\theta g=\dv\big(|\nabla g|^{\theta-2}\nabla g\big)\qquad\text{ for } \ \theta>1.
\end{equation}
Note that for $\theta=2,$ we have $\Delta_\theta g=\Delta g$. 

\begin{theo}[General Hardy inequality]\label{theo:inq-sgn-Lap-q}
 Suppose $1<q,\theta<\infty$, $g\in W^{1,1}_{loc}(\rn)$ is positive and such that $ \Delta_\theta g \in L_{loc}^1(\rn)$ has constant sign. Then for every  compactly supported $\vp\in W^{1,\infty}(\rn)$ inequality~\eqref{H}   holds with $w_1(x)=|\Delta_\theta g|$, $w_2(x)= {|\nabla g|^{q(\theta-1)}}{|\Delta_\theta g|^{1-q}}$, and  $C_H=q^q$.
\end{theo}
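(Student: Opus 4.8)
The plan is to run the classical ``$h$-transform''/Caccioppoli argument alluded to in the introduction: test the distributional identity defining $\Delta_\theta g$ against the admissible function $|\vp|^q$, transfer the $\theta$-Laplacian onto $\vp$ by integration by parts, and close the estimate with a single application of Hölder's inequality. Since $\Delta_\theta g\in L^1_{loc}(\rn)$ has constant sign, I may assume without loss of generality that $\Delta_\theta g\le 0$ (the case $\Delta_\theta g\ge 0$ is identical after a sign change) and write $\mu:=|\Delta_\theta g|=-\Delta_\theta g\ge 0$. Note that the standing hypothesis $\Delta_\theta g\in L^1_{loc}$ presupposes $|\nabla g|^{\theta-1}\in L^1_{loc}(\rn)$, which is exactly what makes the distributional $\theta$-Laplacian well defined and what I will use below.

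The first step is the identity $\int_\rn |\vp|^q\,\mu\,dx=\int_\rn |\nabla g|^{\theta-2}\nabla g\cdot\nabla(|\vp|^q)\,dx$. Here $|\vp|^q\in W^{1,\infty}(\rn)$ is compactly supported with $|\nabla(|\vp|^q)|\le q\,|\vp|^{q-1}|\nabla\vp|$ a.e., and the identity follows from the definition of $\Delta_\theta g$ by mollifying $|\vp|^q$: the left-hand side converges because $\mu\in L^1_{loc}$ and the mollifications converge uniformly, while the right-hand side converges by dominated convergence, using $|\nabla g|^{\theta-1}\in L^1_{loc}$ as dominating integrand and the fact that the mollified gradients are bounded in $L^\infty$ and converge a.e. Estimating the integrand on the right pointwise and applying Hölder's inequality with exponents $\tfrac{q}{q-1}$ and $q$, with $A:=\int_\rn|\vp|^q\mu\,dx$ and $B:=\int_\rn|\nabla\vp|^q|\nabla g|^{q(\theta-1)}\mu^{1-q}\,dx$, gives
\begin{align*}
A&\ \le\ q\int_\rn |\vp|^{q-1}|\nabla g|^{\theta-1}|\nabla\vp|\,dx\\
&\ =\ q\int_\rn \bigl(|\vp|^{q-1}\mu^{\frac{q-1}{q}}\bigr)\bigl(|\nabla\vp|\,|\nabla g|^{\theta-1}\mu^{-\frac{q-1}{q}}\bigr)\,dx\ \le\ q\,A^{\frac{q-1}{q}}\,B^{\frac1q}.
\end{align*}
Since $\vp$ is bounded and compactly supported and $\mu\in L^1_{loc}$, one has $A<\infty$, so dividing by $A^{(q-1)/q}$ (the case $A=0$ being trivial) yields $A\le q^qB$, which is precisely \eqref{H} with $w_1=|\Delta_\theta g|$, $w_2=|\nabla g|^{q(\theta-1)}|\Delta_\theta g|^{1-q}$ and $C_H=q^q$.

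The step I expect to require the most care — indeed the only real obstacle, since the algebra above is elementary — is the handling of the null sets on which $\mu$ or $\nabla g$ vanishes, because the factorization in the Hölder step is legitimate only on $\{\mu>0\}$. The way around this: the left-hand integrand $|\vp|^q\mu$ is supported in $\{\mu>0\}$; moreover, if $B=+\infty$ there is nothing to prove, so one may assume $B<\infty$, and since $w_2=+\infty$ on $\{\mu=0,\ \nabla g\neq 0\}$, finiteness of $B$ forces $|\nabla g|^{\theta-1}|\nabla\vp|=0$ a.e. on $\{\mu=0\}$. Hence every integral displayed above may be restricted to $\{\mu>0\}$, where the computation goes through verbatim. (If $\vp$ is complex-valued the same argument applies, using $|\nabla|\vp||\le|\nabla\vp|$.)
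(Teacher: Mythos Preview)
Your proof is correct and follows essentially the same route as the paper's: integrate by parts to obtain $\int_\rn|\vp|^q|\Delta_\theta g|\,dx\le q\int_\rn|\vp|^{q-1}|\nabla\vp|\,|\nabla g|^{\theta-1}\,dx$, then apply H\"older with the same factorization $(|\vp|^{q-1}\mu^{(q-1)/q})\cdot(|\nabla\vp|\,|\nabla g|^{\theta-1}\mu^{-(q-1)/q})$ and divide through. You are simply more careful than the paper about justifying the integration by parts via mollification, about the finiteness of $A$ before dividing, and about the behaviour on the null set $\{\mu=0\}$.
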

\noindent The above theorem has surprisingly strong consequences as for how easy is its proof. See \cite[Theorem~2.5]{DA} and \cite[Theorem~3.1]{S1} for our inspiration. The precision of the reasoning is illustrated by in Section~\ref{sec:ex-h} with a family of inequalities, where the constant is proven to be optimal.\newline{}

As an application of Theorem~\ref{theo:HP-q} we study the \emph{asymptotic behaviour} of solutions to doubly nonlinear equation~\eqref{dnle}.  Equations as~\eqref{dnle} have been investigated since the 70's due to their intrinsic mathematical difficulties and the wide range of applications, for instance in glaciology and non-Newtonian fluids~\cite{a1970,hutter97,lad69}. For more details on cornerstones of the field we refer to~\cite{Kalashnikov1987, Vaz17} and also the monographs~\cite{Vazquez2007,JLVSmoothing,Zhuoqun2001}. Special cases of equation~\eqref{dnle} are the heat equation ($p=2$, $m=1$), the porous medium equation ($p=2$, $m>1$), and the fast diffusion equation ($p=2$, $m<1$). In the case $m=1$,~\eqref{dnle} involves the already classical $p$-Laplace operator $\Delta_p u=\mathrm{div}\left(|\nabla u|^{p-2}\,\nabla u\right)$. What is more, then evolution equation~\eqref{dnle} requires different methods for $1<p<2$ and $2<p$ being called singular and degenerate, respectively. Studies on solutions to equations like~\eqref{dnle} attract deep attention of various groups developing their theory from different points of view~\cite{Agueh2005,Agueh2009,BDM2018,Duzgun19,fornaro21,FSV2015,IMY1994,Li2001,S2020,Vespri2020}. The issue of convergence of solutions to a self-similar profile to nonlinear diffusion equations has been studied e.g. in~\cite{bbdgv2,CLM2002,Denzler2005,delpino2002a,JLVSmoothing} and in the doubly nonlinear evolution equation in~\cite{Agueh2003,Agueh2009}.

We are interested in~\eqref{dnle} with nonnegative initial datum with $u_0\in\mathrm{L}^1(\R^N)$. Our result is proven for $p>1$, $m>0$, and
\begin{equation}\label{good.range}
\frac{N-p}{p}< m(p-1)< \frac{N-p+1}{N}\,,
\end{equation}
which fits in  the \emph{fast diffusion range} where the mass of the solution is conserved along time. Under such regime infinite speed of propagation holds, i.e., if $u_0\ge0$ then $u(t)>0$ for all times $t>0$. Within~\eqref{good.range} the asymptotic behaviour of a solution is controlled by a \emph{self-similar} one called the \emph{Barenblatt} solution
\begin{equation}\label{barenblatt.solution}
B_D(t,x)=\RS(t)^{-N}\,\BS_D\left(x/\RS(t)\right)\,,
\end{equation}
where
\begin{equation}\label{barenblatt.profile}
\RS(t):=\left(1+\vartheta\,t\right)^\frac{1}{\vartheta}\,\quad\mbox{and}\quad \BS_D(x)=\left(D+\frac{1-m(p-1)}{mp}|x|^\frac{p}{p-1}\right)^\frac{p-1}{m(p-1)-1}\,.
\end{equation}
with $\vartheta=p-N[1-m(p-1)]>0$. The parameter $D$ depends on the mass of $\BS_D$.  In particular, it is known that solutions to~\eqref{dnle}, within the range~\eqref{good.range} converge to $B_D$ in the $L^1(\R^N)$-topology as $t\rightarrow\infty$.  Here, we provide an effective alternative way for one important step of the proof of~\cite[Theorem 1.1]{Agueh2009} and supply it with an estimate on a rate of convergence. 
\begin{theo}\label{theo:as} Let $N\ge3$,  $m>0$, $p>1$ as in~\eqref{good.range} and let nonnegative $ u_0\in\mathrm{L^1}(\rn)$  be such that  
\[
\BS_{D_0}(y)\le u_0(y)\le \BS_{D_1}(y)
\]
for some $D_0,D_1>0$. Let $D_\star>0$ be such that $\int_{\R^d}u_0\,dx=\int_{\R^d}B_{D_\star}(0,x)\,dx$. Then there exist
nonnegative constants $C$ and $\lambda$, and a time $t_0$ such that, for any solution to~\eqref{dnle} with initial datum $u_0$, the following holds
\begin{equation}\label{convergence-rate}
\|u(t) - B_{D_\star}(t)\|_{\mathrm{L}^1(\R^d)}\le C \,t^{-\lambda/2}\,,\quad\forall t\ge t_0\,.
\end{equation}
\end{theo}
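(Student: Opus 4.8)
The plan is to run the classical \emph{entropy method} in self-similar variables, with the \emph{entropy--entropy-dissipation} estimate supplied by the weighted Hardy--Poincar\'e inequality of Theorem~\ref{theo:HP-q}; this is the step at which we replace an argument from~\cite[Theorem~1.1]{Agueh2009} and, as a bonus, extract the rate in~\eqref{convergence-rate}.

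First I would pass to self-similar variables: with $y=x/\RS(t)$, $\tau=\log\RS(t)$ and $v(\tau,y)=\RS(t)^{N}u(t,x)$ (so that $x=\RS(t)y$), equation~\eqref{dnle} becomes a nonlinear Fokker--Planck type equation for $v$ whose unique stationary solution of mass $\int_{\rn}u_0$ is the Barenblatt profile $\BS_{D_\star}$ of~\eqref{barenblatt.profile} (mass being conserved in the range~\eqref{good.range}), and $\|u(t)-B_{D_\star}(t)\|_{L^1(\rn)}=\|v(\tau)-\BS_{D_\star}\|_{L^1(\rn)}$. The assumption $\BS_{D_0}\le u_0\le\BS_{D_1}$ and the comparison principle for~\eqref{dnle} give $\BS_{D_0}(y)\le v(\tau,y)\le\BS_{D_1}(y)$ for all $\tau$; combining this with the known $L^1$-convergence $u(t)\to B_{D_\star}(t)$, interior (Harnack) regularity estimates for~\eqref{dnle}, and the fact that all profiles $\BS_D$ have the same leading-order tail as $|y|\to\infty$, one upgrades it to a uniform \emph{relative-error bound}: for each $\delta>0$ there is a time $t_0$ with $\|v(\tau)/\BS_{D_\star}-1\|_{L^\infty(\rn)}\le\delta$ for all $\tau\ge\tau_0:=\log\RS(t_0)$. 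This localisation near equilibrium is what makes the linearised functional inequality applicable.

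Next I would introduce the relative entropy $\E[\tau]=\E[v(\tau)\,|\,\BS_{D_\star}]$ associated with the energy whose weighted gradient flow is the rescaled equation; in the range~\eqref{good.range} it is finite at $\tau_0$ (using the sandwich bounds), nonnegative, and vanishes only at $\BS_{D_\star}$, and integration by parts along the flow gives the dissipation identity $\tfrac{d}{d\tau}\E[\tau]=-\I[v(\tau)]$, with $\I$ the relative Fisher information. Writing $v=\BS_{D_\star}(1+\psi)$ and using $\|\psi\|_{L^\infty}\le\delta$ from the previous step, both $\E$ and $\I$ are, up to multiplicative constants tending to $1$ as $\delta\to0$, comparable to their homogeneous principal parts $\int_{\rn}|\psi|^{q}w_1$ and $\int_{\rn}|\nabla\psi|^{q}w_2$, where the exponent $q=q(m,p)$ and the radial weights $w_1(x)=h(|x|)$, $w_2(x)=|x|^{q}h(|x|)$ are read off from $\BS_{D_\star}$; the structural identity $w_2=|x|^{q}w_1$ required in Theorem~\ref{theo:HP-q} holds because of the explicit form~\eqref{barenblatt.profile}. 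One then checks, from~\eqref{barenblatt.profile} and the range~\eqref{good.range} with $N\ge3$, that $h\in L^1(\rn)$ and $H_2(\eta)<\infty$, so Theorem~\ref{theo:HP-q} applies and (after, if needed, an elementary interpolation using again $\|\psi\|_{L^\infty}\le\delta$ to match homogeneities) yields $\I[v(\tau)]\ge\Lambda\,\E[\tau]$ for all $\tau\ge\tau_0$, with $\Lambda=\Lambda(\delta,m,p,N)>0$.

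From $\tfrac{d}{d\tau}\E[\tau]\le-\Lambda\,\E[\tau]$, Gr\"onwall's lemma gives $\E[\tau]\le\E[\tau_0]\,e^{-\Lambda(\tau-\tau_0)}$ for $\tau\ge\tau_0$; a Csisz\'ar--Kullback type inequality then bounds $\|v(\tau)-\BS_{D_\star}\|_{L^1(\rn)}$ by $C\,\E[\tau]^{1/2}$, and since $\tau=\tfrac1{\vt}\log(1+\vt t)$ and the $L^1$-norm of the difference of densities is scale invariant, $\|u(t)-B_{D_\star}(t)\|_{L^1(\rn)}\le C\,(1+\vt t)^{-\Lambda/(2\vt)}\le C\,t^{-\lambda/2}$ for $t\ge t_0$, which is~\eqref{convergence-rate} with $\lambda=\Lambda/\vt$. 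The main obstacle is the combination of (i) promoting the two-sided Barenblatt sandwich to the \emph{uniformly small} relative error of the second step --- which requires marrying the comparison principle, the qualitative $L^1$-convergence, and regularity theory for the degenerate/singular operator in~\eqref{dnle} --- and (ii) verifying that the weights produced by the linearised entropy and its dissipation are exactly of the admissible radial form $w_1=h(|x|)$, $w_2=|x|^{q}h(|x|)$ with $h\in L^1(\rn)$ and $H_2(\eta)<\infty$; it is precisely in (ii) that the dimensional restriction $N\ge3$ and the fast-diffusion range~\eqref{good.range} enter.
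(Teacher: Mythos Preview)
Your overall strategy coincides with the paper's: pass to self-similar variables, introduce the relative entropy $\E$ and Fisher information $\I$, reduce the entropy--entropy-production inequality~\eqref{entropy-production-inq} to a linearised Hardy--Poincar\'e inequality furnished by Theorem~\ref{theo:HP-q} (through Example~\ref{ex:hp}), and conclude via Gr\"onwall and Csisz\'ar--Kullback, with $\lambda=\Lambda/\vt$. Where you propose to upgrade the Barenblatt sandwich to a uniform relative error and then Taylor-expand directly, the paper instead quotes \cite[Propositions~4.1 and~4.2]{Agueh2009} for the comparison between the nonlinear $\E,\I$ and their linear counterparts $E,I$; your route is the one taken for $p=2$ in~\cite{bbdgv2} and is a reasonable alternative.

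The linearisation itself, however, is not as you describe, and this is where your sketch would stall. First, the linearised functionals are \emph{quadratic}: expanding $\E$ and $\I$ at $\BSD$ gives, to leading order, $E[\vp]=\int_{\rn}|\vp-\overline\vp|^{2}w_1$ and $I[\vp]=\int_{\rn}|\nabla\vp|^{2}w_2$ with $\vp=v-\BSD$, so $q=2$ and not some $q=q(m,p)$. Second, the weights one actually obtains do \emph{not} satisfy the structural identity $w_2=|x|^{2}w_1$: with $\beta=p/(p-1)$ and $\sigma=m+(p-2)/(p-1)$ one finds $w_1\propto(1+c|x|^{\beta})^{(2-\sigma)/(\sigma-1)}$ but $w_2\propto|x|^{(p-2)/(p-1)}(1+c|x|^{\beta})^{1/(\sigma-1)}$, so $w_2/w_1\propto|x|^{(p-2)/(p-1)}(1+c|x|^{\beta})$, which is $|x|^{2}$ only at infinity. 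The paper bridges this with the pointwise comparisons $c_1 w_1\le(1+|x|^{\beta})^{\alpha}$ and $|x|^{2}(1+|x|^{\beta})^{\alpha}\le c_2 w_2$, reducing to Example~\ref{ex:hp} with $\gamma=0$, $\alpha=(2-\sigma)/(\sigma-1)$. Third, for $1<p<2$ the Fisher weight $w_2$ is singular at the origin and the comparison with the nonlinear $\I$ fails unless one replaces $w_2$ by the regularised $w_{2,\ve}=(1+c|x|^{\beta})^{1/(\sigma-1)}(\ve+|x|^{1/(p-1)})^{-(2-p)}$; this is precisely the content of \cite[Proposition~4.2]{Agueh2009} in that range and cannot be absorbed into ``elementary interpolation''.
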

\noindent In~\cite{Agueh2009} the authors apply the entropy method, where a Hardy--Poincar\'e inequality \eqref{HP} is used to compare the linear entropy with a linearised version version of it. In order to prove such an inequality they invoke the Persson's Theorem as it was done in~\cite{bbdgv1}, which one may avoid entirely. Our main contribution is to prove the needed Hardy--Poincar\'e inequality by the constructive method of Theorem~\ref{theo:HP-q}.
\newline

Our last result is motivated by study of the asymptotic behaviour of the fast diffusion range of the \emph{evolution p-Laplace equation}, i.e. equation~\eqref{dnle} with $m=1$ and $1<p<2$. Only few results are available in this range, see~\cite{Agueh2008,Bonforte2021}. Therefore, in a forthcoming paper, we will give a more detailed analysis for a natural class of initial data. In this work we provide the relevant Hardy--Poincar\'e inequality for this range of parameters. For the proof see Section~\ref{sec:coro}.

\begin{coro}\label{coro:H-diff}
Let $1<p<2$, $N>2$, $N>\tfrac{p}{(2-p)(p-1)}$. 
Then for all compactly supported $\vp\in W^{1,\infty}(\rn)$ it holds that
 \begin{equation}
    \label{inq:ex-h-diff}
\int_\rn \ |\vp|^2|x|^{-\tfrac{p}{p-1}}\left(1+|x|^{\tfrac{p}{p-1}}\right)^{-\frac{p-1}{2-p}} \,dx\leq C_{H} \int_\rn \ |\nabla \vp|^2|x|^{-\frac{2-p}{p-1}}\left(1+|x|^{\tfrac{p}{p-1}}\right)^{-\frac{p-1}{2-p}} \,dx\end{equation} 
with a positive finite constant $C_H$. If we additionally assume that $N\leq 7$ {\bf OR} $p\not\in (p_-,p_+)$ for
\[
 p_- = \tfrac{3}2  -\tfrac{1}{2} \sqrt{\tfrac{N - 7}{N + 1}} \quad \text{and}\quad p_+ = \tfrac{3}2  +\tfrac{1}{2} \sqrt{\tfrac{N - 7}{N + 1}}\, ,
\]
then the constant $C_H$ is optimal and reads
\[C_{H}=4\big(N-\tfrac{p}{(2-p)(p-1)}\big)^{-2}\,.\]

\end{coro}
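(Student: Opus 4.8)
The plan is to deduce \eqref{inq:ex-h-diff} from Theorem~\ref{theo:inq-sgn-Lap-q} applied with $q=2$ and a radial, increasing $g$ tailored to the left‑hand weight. Abbreviate $p'=\tfrac{p}{p-1}$, $\gamma=\tfrac{p-1}{2-p}$, $\alpha=\tfrac{p}{(2-p)(p-1)}$, and note that $\alpha=p'(1+\gamma)$ and $p'+\tfrac{2-p}{p-1}=\tfrac2{p-1}$, so the weights in \eqref{inq:ex-h-diff} are $w_1(x)=|x|^{-p'}(1+|x|^{p'})^{-\gamma}$ and $w_2(x)=|x|^{2-p'}(1+|x|^{p'})^{-\gamma}$. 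First I would introduce the radial field $\vec V(x)=V(|x|)\,x/|x|$ defined by $r^{N-1}V(r)=\int_0^r s^{N-1}w_1(s)\,ds$; since $N>\alpha>p'$ this is finite, $\dv\vec V=w_1\ge0$ pointwise, and there is no concentrated mass at the origin because $|\vec V(x)|=O(|x|^{1-p'})=o(|x|^{1-N})$ as $x\to0$. For any fixed $\theta>p'$ one obtains a positive $g\in W^{1,1}_{loc}(\rn)$ with $|\nabla g|^{\theta-2}\nabla g=\vec V$ — namely the radial primitive of $r\mapsto V(r)^{1/(\theta-1)}$ normalized by $g(0)=1$ — and for this $g$ one has $\Delta_\theta g=\dv\vec V=w_1\in L^1_{loc}(\rn)$ of constant sign, while $|\nabla g|^{2(\theta-1)}|\Delta_\theta g|^{-1}=|\vec V|^2w_1^{-1}$. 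Hence Theorem~\ref{theo:inq-sgn-Lap-q} gives
\[
\int_\rn|\vp|^2 w_1\,dx\le 4\int_\rn|\nabla\vp|^2\,\frac{|\vec V|^2}{w_1}\,dx
\]
for all compactly supported $\vp\in W^{1,\infty}(\rn)$, and since $|\vec V|^2 w_1^{-1}=\bigl(|\vec V|^2/(w_1w_2)\bigr)w_2$, inequality \eqref{inq:ex-h-diff} holds with $C_H=4\,\sup_{r>0}|\vec V(r)|^2/\bigl(w_1(r)w_2(r)\bigr)$.

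It then remains to evaluate this supremum. Using the two identities above, a short computation gives $|\vec V(r)|^2/(w_1w_2)(r)=I(r)^2$, where
\[
I(r)=r^{-(N-p')}\bigl(1+r^{p'}\bigr)^{\gamma}\int_0^r s^{N-1-p'}\bigl(1+s^{p'}\bigr)^{-\gamma}\,ds .
\]
I would check that $I(0^+)=\tfrac1{N-p'}$ and $I(\infty)=\tfrac1{N-\alpha}$ — the two limits requiring exactly $N>p'$ and $N>\alpha$ — and that at any critical point $r_c$ of $I$ one has
\[
I(r_c)=\frac{1+r_c^{p'}}{(N-p')+(N-\alpha)\,r_c^{p'}}<\frac1{N-\alpha},
\]
the last inequality because $p'<\alpha$. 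As a local maximum exceeding $\tfrac1{N-\alpha}$ would be such a critical value, while both endpoint limits are $\le\tfrac1{N-\alpha}$, one concludes $\sup_{r>0}I(r)=\tfrac1{N-\alpha}$, which is finite. This yields \eqref{inq:ex-h-diff} with $C_H=4(N-\alpha)^{-2}$, in particular with a positive finite constant, settling the first assertion.

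For optimality I would test \eqref{inq:ex-h-diff} against a radial family concentrating at infinity: for large $L$ take $\vp_L$ supported in $\{1\le|x|\le2L\}$, interpolating on $\{1\le|x|\le2\}$, equal to $|x|^{-(N-\alpha)/2}$ on $\{2\le|x|\le L\}$, and truncated on $\{L\le|x|\le2L\}$; the exponent $(N-\alpha)/2$ is the resonant one at the bottom $\mu=\tfrac14(N-\alpha)^2$ of the limiting Euler--Lagrange problem $-\dv(w_2\nabla\vp)=\mu\,w_1\vp$ near infinity. On $\{2\le|x|\le L\}$ the integrands $|\vp_L|^2w_1|x|^{N-1}$ and $|\nabla\vp_L|^2w_2|x|^{N-1}$ equal, up to bounded factors tending to $1$, $|x|^{-1}$ and $\tfrac14(N-\alpha)^2|x|^{-1}$ respectively, whereas the pieces on $\{1\le|x|\le2\}$ and $\{L\le|x|\le2L\}$ contribute $O(1)$; hence the associated Rayleigh quotient tends to $4(N-\alpha)^{-2}$ as $L\to\infty$, giving the matching lower bound. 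I expect the genuinely delicate point to be making this sharpness step fully rigorous and, above all, pinning down the role of the extra hypothesis ``$N\le7$ or $p\notin(p_-,p_+)$'', which is equivalent to the single inequality $(N+1)(p-1)(2-p)\le2$ — the discriminant $\sqrt{(N-7)/(N+1)}$ in $p_\pm$ being precisely the threshold of the associated quadratic — and which should delimit the range in which the intended, more explicit construction of the extremal profile goes through without loss; outside it one retains only the finite constant from the first part.
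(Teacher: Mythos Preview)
Your argument is correct and in fact proves more than the corollary asserts. The paper deduces the result from Example~\ref{ex:2_laplace}, where Theorem~\ref{theo:inq-sgn-Lap-q} is applied with the \emph{explicit} choice $g(x)=|x|^{\gamma+2}(1+|x|^\beta)^\alpha$; then $\Delta g$ is only comparable to the target weight, and matching the constants $c_1,c_2$ so that $q^qc_2/c_1=4(N-\tfrac{p}{(2-p)(p-1)})^{-2}$ requires the side condition $\alpha\beta+2(\gamma+1)+N\le0$, which is exactly $(N+1)(p-1)(2-p)\le2$, i.e.\ ``$N\le7$ or $p\notin(p_-,p_+)$''. Optimality is then obtained by scaling to the classical power-weight Hardy inequality. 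Your choice of $g$ is smarter: by setting $|\nabla g|^{\theta-2}\nabla g=\vec V$ with $r^{N-1}V(r)=\int_0^r s^{N-1}w_1(s)\,ds$ you force $\Delta_\theta g=w_1$ \emph{exactly}, and the whole analysis reduces to the single scalar function $I(r)$. Your critical-point identity $I(r_c)=\tfrac{1+r_c^{p'}}{(N-p')+(N-\alpha)r_c^{p'}}$ is correct, and since $p'<\alpha$ it is strictly below the endpoint value $I(\infty)=1/(N-\alpha)$; together with $I(0^+)=1/(N-p')<1/(N-\alpha)$ this pins down $\sup I=1/(N-\alpha)$ with no extra hypothesis. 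Thus you obtain $C_H\le4(N-\alpha)^{-2}$ for every $N>\alpha$, and your test-function sequence (or the paper's scaling argument, which gives the lower bound unconditionally as well) yields equality.

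In short: the extra assumption in the statement is an artifact of the paper's particular auxiliary function, not of the inequality itself, and your construction removes it. Your final paragraph undersells your own proof; there is no ``delicate point'' left to resolve, and the condition $(N+1)(p-1)(2-p)\le2$ plays no role in your route.
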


The range $N<\tfrac{p}{(2-p)(p-1)}$ is not considered in the above result since the left-hand-side weight becomes integrable, so different techniques are needed in order to obtain an inequality, see Theorem~\ref{theo:HP-q}. From the point of view of asymptotics, in this last range the difference of two Barenblatt profiles is integrable, while in the range considered in the above corollary is not. The case $N=\tfrac{p}{(2-p)(p-1)}$ happens only if $N\ge6$ and for $p=\tfrac32-\tfrac1{2N}\pm\tfrac{\sqrt{N^2-6N+1}}{2N}$. These two last exponents play the same role played by $m_\star=(N-4)/(N-2)$ in the better understood case of the fast diffusion equation (eq.~\eqref{dnle} with $p=2$, $0<m<1$), where particularly sophisticated techniques were needed to deal with this strange exponent, see~\cite{bbdgv2,Bonforte2009}.

\medskip

\noindent \emph{Organization of the paper. } Section~\ref{sec:inq} concerns proofs of inequalities~\eqref{P}, \eqref{HP}, and~\eqref{H}. In Section~\ref{sec:ex-hp} we show examples of \eqref{HP} and in  Section~\ref{sec:ex-h}  of \eqref{H}; here Corollary~\ref{coro:H-diff} is proven. The proof of Theorem~\ref{theo:as} is presented in Section~\ref{sec:as}.

\section{Functional inequalities}\label{sec:inq}

\subsection{Poincar\'e inequality on $\r$. Proof of Theorem~\ref{theo:Poin-line-const}} 
\label{sec:Poin-line-const}We will employ the classical result of Muckenhoupt.
\begin{lem}[Muckenhoupt's inequality, Theorem~1, \cite{M}]\label{lem:Mw} Let $1< q<\infty$, $0\le w_1,w_2$  such that $w_1\in L^1_{loc}([0,\infty))$. There exists $0 < C_{M}< \infty$ for which \[\int_0^\infty |f-f(0)|^q w_1(r)\, dr \leq C_{M}\int_0^\infty |f'|^q w_2(r)\,dr\] is true for every compactly supported $f\in W^{1,\infty}([0,\infty))$ if and only if
\begin{align}\label{HM-w_1-finite}
    H_{M}:=&\sup_{\rho>0}\left(\int_\rho^\infty w_1(r)\,dr \left[\int_0^\rho {(w_2(r))^{-\frac{1}{q-1}}dr}\right]^{q-1}\right)<\infty\nonumber
\end{align}
Moreover, then the optimal constant $C_M$ satisfies 
$H_{M}\leq C_{M}\leq {q^q}{(q-1)^{1-q}}H_{M}.$ 
\end{lem}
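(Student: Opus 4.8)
The plan is to recognise the stated estimate as the weighted one–dimensional Hardy inequality and to prove it in the sharp form by a parameter–dependent Hölder splitting together with an integration by parts. First I would reduce to nonnegative integrands: since any compactly supported $f\in W^{1,\infty}([0,\infty))$ satisfies $|f(r)-f(0)|\le\int_0^r|f'(s)|\,ds$, writing $g:=|f'|\ge0$ it suffices to establish
\[
\int_0^\infty\Big(\int_0^r g(s)\,ds\Big)^q w_1(r)\,dr\ \le\ C_M\int_0^\infty g(r)^q w_2(r)\,dr
\]
for every bounded, compactly supported $g\ge0$; conversely, testing this Hardy-type form on suitable $g$ returns the original inequality after a routine truncation. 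Throughout set $W:=w_2^{-1/(q-1)}$, $V(r):=\int_0^r W(s)\,ds$ and $q':=q/(q-1)$. A preliminary step records that in the non-degenerate case $H_M<\infty$ forces $V$ to be finite and increasing on the relevant interval and $w_1\in L^1([0,\infty))$ with $\int_r^\infty w_1\le H_M\,V(r)^{-(q-1)}$ — this last bound, a mere rearrangement of the definition of $H_M$, is the only way $H_M$ will enter.

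For necessity I would test with $g_{\rho,n}:=\min(W,n)\,\mathbf{1}_{[0,\rho]}$: then $\int_0^r g_{\rho,n}$ equals $\int_0^\rho\min(W,n)$ for all $r\ge\rho$, so the left-hand side is at least $\big(\int_0^\rho\min(W,n)\big)^q\int_\rho^\infty w_1$, while $\min(W,n)^q w_2\le\min(W,n)$ pointwise shows the right-hand side is at most $C_M\int_0^\rho\min(W,n)$. Dividing and letting $n\to\infty$ gives $V(\rho)^{q-1}\int_\rho^\infty w_1\le C_M$, and the supremum over $\rho$ yields $H_M\le C_M$.

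For sufficiency — the substantive part — I would fix a parameter $\gamma\in(0,\tfrac{q-1}{q})$ and apply Hölder's inequality with exponents $q$ and $q'$ to the factorisation $g\,ds=\big(g\,w_2^{1/q}V^{\gamma}\big)\big(W^{1/q'}V^{-\gamma}\big)\,ds$, using $W=V'$ to evaluate the second factor, which produces
\[
\int_0^r g\,ds\ \le\ \Big(\int_0^r g^q w_2 V^{\gamma q}\,ds\Big)^{1/q}\ \frac{V(r)^{(1-\gamma q')/q'}}{(1-\gamma q')^{1/q'}}.
\]
Raising to the $q$-th power, multiplying by $w_1(r)$, integrating in $r$ and applying Tonelli's theorem gives
\[
\int_0^\infty\Big(\int_0^r g\Big)^q w_1(r)\,dr\ \le\ \frac{1}{(1-\gamma q')^{q-1}}\int_0^\infty g(s)^q w_2(s)\,V(s)^{\gamma q}\Big(\int_s^\infty V(r)^{\beta}w_1(r)\,dr\Big)ds,
\]
where $\beta:=(1-\gamma q')(q-1)\in(0,q-1)$. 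The key estimate is $\int_s^\infty V(r)^{\beta}w_1(r)\,dr\le\frac{q-1}{q-1-\beta}\,H_M\,V(s)^{\beta-(q-1)}$, obtained by integrating by parts with the antiderivative $-\int_r^\infty w_1$ of $-w_1$, inserting $\int_r^\infty w_1\le H_M V(r)^{-(q-1)}$, and integrating $V^{\beta-q}V'$; the condition $0<\beta<q-1$ is exactly what makes the boundary term at infinity vanish. Since $\gamma q+\beta-(q-1)=0$ the remaining powers of $V(s)$ cancel, leaving $C_M\le\frac{(q-1)^q}{\beta^{q-1}\gamma q}\,H_M$ with $\beta=(q-1)-\gamma q$; minimising over $\gamma$ (the optimum is $\gamma q=\tfrac{q-1}{q}$, i.e. $\beta=\tfrac{(q-1)^2}{q}$) yields the stated bound $C_M\le q^q(q-1)^{1-q}H_M$.

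I expect the main obstacle to be obtaining the \emph{sharp} constant rather than merely some finite one: the naive split ($\gamma=0$) reduces the tail only to $V(s)^{q-1}\int_s^\infty w_1$, which is useless because $V$ is increasing, so the free parameter $\gamma$, the integration-by-parts tail bound, and the final one-variable optimisation are all essential and must be dovetailed precisely. A secondary, purely technical difficulty is justifying the reductions — finiteness of $V$, integrability of $w_1$, vanishing of the boundary term at infinity, and admissibility of the (possibly unbounded) test functions — when the weights degenerate; these are dispatched by monotone-convergence and truncation arguments.
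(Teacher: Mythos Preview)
The paper does not supply its own proof of this lemma: it is quoted verbatim as Muckenhoupt's classical result (Theorem~1 of \cite{M}) and then used as a black box in the proof of Theorem~\ref{theo:Poin-line-const}. There is therefore nothing to compare your argument against in the paper itself.

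That said, what you have written is essentially Muckenhoupt's original proof, and it is correct. Your necessity argument---testing with truncations of $W=w_2^{-1/(q-1)}$ on $[0,\rho]$---is the standard extremiser computation, and your pointwise bound $\min(W,n)^q w_2\le \min(W,n)$ is exactly what makes it clean. For sufficiency, the parameter-$\gamma$ H\"older split that inserts powers of $V=\int_0^{\cdot}W$, followed by integration by parts against the tail $\int_r^\infty w_1$ and the final one-variable optimisation, reproduces the sharp upper bound $q^q(q-1)^{1-q}H_M$; the algebra checks (in particular the cancellation $\gamma q+\beta-(q-1)=0$ and the optimal $\beta=(q-1)^2/q$). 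Your remarks on the technical reductions---vanishing boundary term when $\beta<q-1$, finiteness of $V$, integrability of $w_1$---are accurate and are the usual places where one has to be slightly careful with degenerate weights; monotone truncation handles them as you say.

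In short: your proposal is a faithful reconstruction of the cited result, whereas the paper simply invokes it.
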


We are in a position to prove Poincar\'e inequality~\eqref{P} on the real line.
\begin{proof}[Proof of Theorem~\ref{theo:Poin-line-const}]
We assume first that $B_\eta^+$ and $B_\eta^-$ are both finite. For any $m$ we have that
\begin{align*}
    \int_{-\infty}^\infty |f-(f)_{w_1}|^q w_1(s)\, ds&\leq 2^{q}\int_{-\infty}^\infty |f-f(m)|^q w_1(s)\, ds\\
    &= 2^{q}\int_{-\infty}^m |f-f(m)|^q w_1(s)\, ds+2^{q}\int_{m}^\infty |f-f(m)|^q w_1(s)\, ds\\
    &=:2^q(L_1(m)+L_2(m)).
\end{align*}
Let us define
\[
J_1(m):=\int_{-\infty}^m |f'|^q w_2(s)\,ds\quad\mbox{and}\quad J_2(m):=\int_{m}^\infty|f'|^q w_2(s)\,ds\,,
\]
and let us call $A_m^-$ ($A_m^+$ resp.) the optimal constant of the inequality $ L_1(m)\le A_m^- J_1(m) $  (of inequality $ L_2(m)\le A_m^+ J_2(m)$ resp.). In particular, if we consider $m=\eta$,  we obtain that 
\[
B_\eta^- \le  A_\eta^- \le {q}{(q')^{q-1}} \,B_\eta^-\quad\mbox{and}\quad B_\eta^+ \le  A_\eta^+ \le {q}{(q')^{q-1}} \,B_\eta^+
\]
as a consequence of a change of variables and Lemma~\ref{lem:Mw}. By summing up the previous inequalities, we find that
\begin{equation*}\begin{split}
 \int_{-\infty}^\infty |f-(f)_{w_1}|^q w_1(s)\, ds &\le  2^q\,(L_1(m)+L_2(m)) \\
 &\le 2^q\,\max\{A_\eta^+\,A_\eta^-\}  (J_1(m)+J_2(m)) \\
 & \le 2^q\, {q}{(q')^{q-1}}\, \max\left(B_\eta^+,B_\eta^-\right)\,\int_{\R} |f'|^q w_2(s) \,ds\,,
\end{split}
\end{equation*}
which proves inequality~\eqref{P} with the wanted upper bound on the constant $C_P$.\newline{}

Assume now that inequality~\eqref{P} holds with a finite constant $C_P>0$. Let us first restrict our attention to the case when $B_\eta^+, B_\eta^-<\infty$ (so that, by the optimality of $A_\eta^+$ and $A_\eta^-$, $\infty>A_\eta^+\ge B_\eta^+$ and $\infty>A_\eta^-\ge B_\eta^-$ ) and will to prove the lower bound on the constant $C_P$.  Without any loss of generality we can assume that $\|w_1\|_{L^1(\R)}=1$.
Let us recall that $A_\eta^+$ is the optimal constant for inequality $ L_2(\eta)\le A_\eta^+ J_2(\eta)$. By optimality, for any $\varepsilon>0$ there exists a function $f_\varepsilon$ and such that
\begin{equation}\label{inq.P.1}
\int_\eta^{\infty} \left|\int_\eta^s f_\varepsilon(t)dt\right|^q w_1(s)\,ds \ge (A_\eta^+-\varepsilon) \int_\eta^\infty |f_\varepsilon(s)|^q w_2(s) ds\,.
\end{equation}
Note that, without loss of generality, we can assume that $f_\ve\geq 0$. Let us define $F_\varepsilon(x)=0$ if $x\le \eta$ and $F_\varepsilon(x)=\int_\eta^x f_\varepsilon(t)dt$ if $x\ge \eta$. Therefore, by the property of the median, we have ${w_1}(\{F_\varepsilon>0\})\le1/2$. Then, by H\"older inequality, we obtain
\[
\int_{\R}F_\varepsilon(t) {w_1}(t)\, dt \le
\left(\int_{\R} |F_\varepsilon(t)|^q\,{w_1}(t) dt\right)^\frac{1}{q}\, {w_1}\left(\{F_\varepsilon>0\}\right)^\frac{q-1}{q}\le 2^{-\frac{q-1}{q}}{\left(\int_{\R} |F_\varepsilon(t)|^q\,{w_1}(t)\, dt\right)^\frac{1}{q}} \,.
\]
Then, by using the above inequality with the triangle inequality and inequality~\eqref{inq.P.1}, we find
\begin{equation*}\begin{split}
\int_{\R} |F_\varepsilon(t)-(F_\varepsilon)_{w_1}|^q\,{w_1}(t) dt
\ge &\left(\left(\int_{\R} |F_\varepsilon(t)|^q\,{w_1}(t) dt\right)^\frac{1}{q}-(F_\varepsilon)_{w_1}\right)^q\\
\ge & \frac{(2^\frac{q-1}{q}-1)^q}{2^{q-1}}\,\int_{\R} |F_\varepsilon(t)|^q\,{w_1}(t) dt \\
\ge &\frac{(2^\frac{q-1}{q}-1)^q}{2^{q-1}}\, (A_\eta^+-\varepsilon)\, \int_\eta^\infty |f_\varepsilon(s)|^q {w_2}(s)\,ds\, \\
\ge &\frac{(2^\frac{q-1}{q}-1)^q}{2^{q-1}}\, (B_\eta^+-\varepsilon)\,  \int_\eta^\infty |f_\varepsilon(s)|^q {w_2}(s)\,ds\, \\
\ge & \frac{(2^\frac{q-1}{q}-1)^q}{2^{q-1}}\, \frac{(B_\eta^+-\varepsilon)}{C_P}\,\int_{\R} |F_\varepsilon(t)-(F_\varepsilon)_{w_1}|^q\,{w_1}(t) dt\,,
\end{split}
\end{equation*}
where we used that $A_\eta^+$ is the optimal constant and inequality~\eqref{P}. This proves that
\[C_P\ge \frac{(2^\frac{q-1}{q}-1)^q}{2^{q-1}}\, (B_\eta^+-\varepsilon)\]
for any $\varepsilon>0$. The same construction can be used for the case of $B_\eta^-$. 
It remains to prove that if inequality~\eqref{P} holds for a finite $C_P>0$ then $B_\eta^+,B_\eta^-<\infty$.  To justify this one may argue by contradiction using the same lines as above. 
\end{proof}

When we apply Theorem~\ref{theo:Poin-line-const} for $w_1(r)=r^{N-1}h(r)\mathds{1}_{[0,\infty)}(r)$ and $w_2(r)=r^{N-1+q}h(r)\mathds{1}_{[0,\infty)}(r)$, and denote $(f)=(f)_{w_1},$ we get the following consequence.
\begin{coro}\label{coro:P}
Suppose $1< q<\infty$ and $0\le h\in L^1_{loc}([0,\infty))$ is such that~
and there exists $m>0$ for which $H_2(m)$ given by~\eqref{H2-finite} is finite. There exists $0 < C< \infty$ such that
\begin{equation}
    \label{inq:better-w_1}\int_0^\infty |f-(f)|^q r^{N-1} h(r)\, dr\leq C  \int_0^{\infty} |f'|^q r^{N-1+q}h(r)\, dr
\end{equation}
holds for every compactly supported $f\in W^{1,\infty}([0,\infty))$. Moreover, $C\leq 2^q\, {q}{(q')^{q-1}}\, H_2(m)$.
\end{coro}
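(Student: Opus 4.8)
The plan is to deduce Corollary~\ref{coro:P} directly from Theorem~\ref{theo:Poin-line-const} by a suitable choice of weights on the whole real line. We set
\[
w_1(r)=r^{N-1}h(r)\,\mathds{1}_{[0,\infty)}(r),\qquad w_2(r)=r^{N-1+q}h(r)\,\mathds{1}_{[0,\infty)}(r),
\]
so that both weights vanish identically on $(-\infty,0)$. Since $h\in L^1_{loc}([0,\infty))$ and, by the hypothesis $H_2(m)<\infty$ together with a standard argument, $w_1\in L^1(\R)$, all the structural assumptions of Theorem~\ref{theo:Poin-line-const} are met. The first step is to observe that, because $w_1$ is supported in $[0,\infty)$ and $h\in L^1(\R^N)$ corresponds exactly to $w_1\in L^1(\R)$, a median $\eta$ of $w_1(s)\,ds$ in the sense of Theorem~\ref{theo:Poin-line-const} is nonnegative and coincides with the radial median of $h(r)r^{N-1}\,dr$. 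In fact any point $m\in[0,\infty)$ with $\int_0^m w_1=\tfrac12\|w_1\|_{L^1}$ is such a median; this is the value of $m$ for which $H_2(m)$ is assumed finite.

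The second step is to identify the Muckenhoupt quantities $B_\eta^+$, $B_\eta^-$ from~\eqref{Muckenhoupt.quantities} with the two terms defining $H_2(\eta)$ in~\eqref{H2-finite}. For $B_\eta^+$ one has, for $t\ge\eta\ge0$,
\[
\int_t^\infty w_1(s)\,ds=\int_t^\infty r^{N-1}h(r)\,dr,\qquad
\int_\eta^t w_2(s)^{\frac1{1-q}}\,ds=\int_\eta^t\bigl(r^{N-1+q}h(r)\bigr)^{-\frac1{q-1}}\,dr,
\]
so $B_\eta^+$ equals the first supremum in~\eqref{H2-finite}. For $B_\eta^-$, since $w_1$ and $w_2$ vanish on $(-\infty,0)$, the supremum over $t\le\eta$ reduces to the supremum over $t\in[0,\eta]$, and one gets $\int_{-\infty}^t w_1=\int_0^t r^{N-1}h(r)\,dr$ and $\int_t^\eta w_2^{\frac1{1-q}}=\int_t^\eta(r^{N-1+q}h(r))^{-\frac1{q-1}}\,dr$; contributions from $t<0$ give a vanishing factor $\int_{-\infty}^t w_1=0$ and so do not matter. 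Hence $B_\eta^-$ equals the second supremum in~\eqref{H2-finite}, and therefore $\max(B_\eta^+,B_\eta^-)=H_2(\eta)$, which is finite by assumption.

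The third step is to apply Theorem~\ref{theo:Poin-line-const}: since $B_\eta^\pm<\infty$, inequality~\eqref{P} holds for these weights with $C_P\le\frac{(2q)^q}{(q-1)^{q-1}}\max(B_\eta^+,B_\eta^-)=\frac{(2q)^q}{(q-1)^{q-1}}H_2(\eta)=2^q\,q\,(q')^{q-1}H_2(\eta)$, recalling $q'=q/(q-1)$. A compactly supported $f\in W^{1,\infty}([0,\infty))$ extends to a compactly supported Lipschitz function on $\R$ (e.g. by even reflection, or simply by the constant value $f(0)$ on $(-\infty,0]$; the latter has zero derivative there and thus does not affect the right-hand side), and since $w_1,w_2$ are supported in $[0,\infty)$ all integrals in~\eqref{P} reduce to integrals over $[0,\infty)$, with $(f)_{w_1}$ the weighted average appearing in~\eqref{inq:better-w_1}. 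This yields~\eqref{inq:better-w_1} with $C\le 2^q\,q\,(q')^{q-1}H_2(m)$, as claimed. The only mild subtlety — and the step requiring a moment of care rather than real difficulty — is the bookkeeping in the second step, namely checking that the one-sided Muckenhoupt suprema over unbounded intervals collapse to the finite-interval suprema in $H_2$ precisely because the weights are one-sidedly supported; once this is in place the result is immediate.
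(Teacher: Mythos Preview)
Your approach is exactly the paper's: the corollary is stated there as an immediate consequence of Theorem~\ref{theo:Poin-line-const} applied to $w_1(r)=r^{N-1}h(r)\mathds{1}_{[0,\infty)}(r)$ and $w_2(r)=r^{N-1+q}h(r)\mathds{1}_{[0,\infty)}(r)$, and your identification $\max(B_m^+,B_m^-)=H_2(m)$ is precisely the bookkeeping required.

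There is, however, one slip. You assert that ``this is the value of $m$ for which $H_2(m)$ is assumed finite'', thereby identifying the $m$ of the hypothesis with a median $\eta$ of $w_1$. Nothing in the statement of the corollary forces $m$ to be the median, so invoking Theorem~\ref{theo:Poin-line-const} as a black box only yields the bound $C\le 2^q q(q')^{q-1}H_2(\eta)$, not $H_2(m)$. The fix is painless: the \emph{proof} of the upper bound in Theorem~\ref{theo:Poin-line-const} begins ``For any $m$ we have\ldots'' and never uses that the splitting point is the median (the median is only needed for the lower bound). Hence the estimate $C_P\le 2^q q(q')^{q-1}\max(B_m^+,B_m^-)$ holds for every $m$, which is what you need.

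A second cosmetic point: your extension of $f$ to $\R$ by the constant $f(0)$ is not compactly supported when $f(0)\neq 0$. Replace it by any compactly supported Lipschitz extension (e.g.\ linear from $f(0)$ to $0$ on $[-1,0]$ and zero on $(-\infty,-1]$); since $w_1,w_2$ vanish on $(-\infty,0)$ this does not alter either side of the inequality.
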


\subsection{Hardy--Poincar\'e inequality on $\rn$}\label{sec:proof-HP}
We prove the inequality with radial weights as an application of the Poincar\'e inequality~\eqref{P}. For this we introduce the standard change of variables from Cartesian to spherical coordinates, that is \[r = |x|\qquad\text{ and }\qquad\vt = x/|x|.\] In these coordinates, the gradient can be written as $(\partial r, \frac{1}{r}\nabla_\vt)$ where $\partial r=\frac{x}{r}\cdot \nabla $ is the partial derivative with respect to the radial variable $r$ and $\nabla_\vt$ is the derivative with respect to the angular variables. Then\begin{equation}
     \label{rad-grad}|\partial_r{f}(r,\vt) |^2+\tfrac{1}{r^2}|\nabla_\vt{f}(r,\vt)|^2=|\nabla f(x)|^2.
 \end{equation}
 
 By $S^{N-1}\subset\rn$ we denote the unit sphere and parametrize it with the variable~$\vt$. 
Moreover, for every  compactly supported function  $f\in W^{1,\infty}(\rn)$ we denote the directional average as
\begin{equation}
    \label{wtfmu} \wt{f}(\vt):=\frac{|S^{N-1}|}{H_1}\int_0^\infty f(r,\vt){r^{N-1}}h(r)\,dr.
\end{equation}
Recall that the global average with respect to $h$ is given by
\begin{equation}
    \label{barf}
\overline{f}=\frac{1}{|S^{N-1}|}\int_{S^{N-1}}\wt{f}(\vt)d\vt=\frac{1}{H_1 }\int_{\rn} f(x)h(|x|)\,dx.
\end{equation}
We recall the Poincar\'e inequality on the sphere.
\begin{lem}[Spherical Poincar\'e inequality]
\label{lem:sph-Poin} Let $1\leq q < N$ if $N\geq 3,$ and $1\le q \le 2$ when $N=2$. Then there exists $C_{\rm sph}>0$ such that \begin{equation}
    \label{inq:sph-Poin}
\int_{S^{N-1}} |\wt{f}(\vt)-\overline{f}|^q \,d\vt\leq C_{\rm sph}
\int_{S^{N-1}} |\nabla_\vt \wt{f}(\vt) |^q \,d\vt\end{equation}
for every $f\in W^{1,q}(S^{N-1})$.
\end{lem}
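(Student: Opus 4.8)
The plan is to reduce \eqref{inq:sph-Poin} to the classical Poincar\'e--Wirtinger inequality on a bounded smooth domain of $\rn$, by pulling a function on $S^{N-1}$ back to a spherical shell via its $0$-homogeneous extension and exploiting the splitting of the gradient recorded in \eqref{rad-grad}. Throughout I would write $v$ for the generic element of $W^{1,q}(S^{N-1})$ appearing in the statement as $\wt f$, and $\bar v := \frac1{|S^{N-1}|}\int_{S^{N-1}} v\,d\vt$ for its spherical mean (so $\bar v=\overline f$, cf.\ \eqref{barf}). Since $C^\infty(S^{N-1})$ is dense in $W^{1,q}(S^{N-1})$ and both sides of \eqref{inq:sph-Poin} are continuous under $W^{1,q}(S^{N-1})$-convergence of $v$ (the map $v\mapsto\bar v$ being a bounded linear functional on a finite-measure space), it suffices to prove the inequality for $v\in C^\infty(S^{N-1})$.

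So, given $v\in C^\infty(S^{N-1})$, I would set $A:=\{x\in\rn:1\le|x|\le2\}$ and $V(x):=v(x/|x|)$ for $x\in A$; then $V\in C^\infty(A)\subset W^{1,q}(A)$. Because $V$ is constant along rays, $\partial_r V\equiv0$, so \eqref{rad-grad} gives $|\nabla V(x)|=|x|^{-1}\,|\nabla_\vt v(x/|x|)|$. Writing $I_s:=\int_1^2 r^{\,N-1-s}\,dr$, which is finite and strictly positive for every $s\in\R$ since $[1,2]$ is bounded away from $0$, a passage to spherical coordinates yields, for any constant $c$,
\[
\int_A |V-c|^q\,dx = I_0\int_{S^{N-1}}|v-c|^q\,d\vt,
\qquad
\int_A |\nabla V|^q\,dx = I_q\int_{S^{N-1}}|\nabla_\vt v|^q\,d\vt .
\]
In particular $|A|=I_0\,|S^{N-1}|$, whence the mean value $(V)_A:=|A|^{-1}\int_A V\,dx$ equals $\bar v$.

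Next I would invoke the Poincar\'e--Wirtinger inequality on $A$: as $A$ is a bounded connected set with smooth boundary, there is $C_A=C_A(N,q)>0$ such that $\int_A|w-(w)_A|^q\,dx\le C_A\int_A|\nabla w|^q\,dx$ for all $w\in W^{1,q}(A)$. Applying it to $w=V$ and using the two displayed identities with $c=\bar v=(V)_A$ gives
\[
I_0\int_{S^{N-1}}|v-\bar v|^q\,d\vt \;\le\; C_A\,I_q\int_{S^{N-1}}|\nabla_\vt v|^q\,d\vt ,
\]
i.e.\ \eqref{inq:sph-Poin} with $C_{\rm sph}=C_A\,I_q/I_0$; letting $v$ range over $W^{1,q}(S^{N-1})$ by density finishes the argument.

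There is no real obstacle here: once the pullback is set up the argument is bookkeeping, and the only steps deserving a word of care are the density reduction to smooth $v$ (so that one need not discuss the Sobolev regularity of the homogeneous extension of a rough function) and the elementary identity $(V)_A=\bar v$. Observe that $I_q<\infty$ for every $q\in[1,\infty)$, so the restrictions on $q$ in the statement are not needed for this lemma --- they are exactly the ones forced by the spherical integrability used in Theorem~\ref{theo:HP-q}; likewise, an explicit value of $C_A$ for the shell $A$ (hence of $C_{\rm sph}$) is available but immaterial for the sequel.
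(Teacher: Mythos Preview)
Your argument is correct. The $0$-homogeneous extension to the annulus $A=\{1\le|x|\le2\}$, the gradient splitting via \eqref{rad-grad}, the identities for $\int_A|V-c|^q$ and $\int_A|\nabla V|^q$ in terms of $I_0$ and $I_q$, and the observation $(V)_A=\bar v$ are all accurate, and the Poincar\'e--Wirtinger inequality on the bounded connected Lipschitz domain $A$ closes the loop. Your remark that the range restrictions on $q$ are immaterial for the lemma itself is also right.

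The paper, by contrast, does not prove the lemma at all: it simply invokes \cite[Theorem~2.10]{hebey} (the Poincar\'e inequality on a smooth compact Riemannian manifold) for $N\ge3$, and a spherical-harmonic/Fourier argument for $N=2$, $q=2$. So your route is genuinely different and more elementary: it reduces the spherical inequality to the Euclidean Poincar\'e--Wirtinger inequality on a shell, avoiding any Riemannian machinery. What the paper's citations buy is brevity and, in the case $q=2$, the sharp constant $C_{\rm sph}=1/(N-1)$ recorded after the lemma; what your approach buys is a self-contained argument valid for every $q\ge1$ and every $N\ge2$, at the cost of a non-explicit (and non-optimal) constant $C_{\rm sph}=C_A\,I_q/I_0$.
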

\noindent In the case $N\ge3$, inequality~\eqref{inq:sph-Poin} is a direct consequence of \cite[Theorem~2.10]{hebey} formulated for a smooth and compact Riemannian manifold. In the case $q=2$ and $N=2$, it follows from a decomposition in spherical harmonics which then coincides with the Fourier basis, see~\cite{andrews99}. To our best knowledge its optimal value of the constant from~\eqref{inq:sph-Poin} is  known only in the case of $q=2$, when $C_{\rm sph}=\frac{1}{N-1}$, see~\cite[Chapter~4, Proposition~1]{sph-Poin-constant-book}.  \newline{}

The following result would be instrumental in the proof of Theorem~\ref{theo:HP-q}.
\begin{prop}\label{prop:radsuff} Suppose assumptions of Theorem~\ref{theo:HP-q} are satisfied. If for every  compactly supported function  $f\in W^{1,\infty}(\rn)$ the following inequality holds
\begin{equation}\label{eq:radinq-q}
\int_0^\infty\int_{S^{N-1}} |f(r,\vt)-\wt{f}(\vt)|^q r^{N-1}h(r)\,d\vt\, dr\leq C_1
 \int_0^\infty \int_{S^{N-1}} |\partial_r {f}(r,\vt) |^q r^{N-1+q}h(r)d\vt\, dr,
\end{equation}
 then there exist $C_{HP}>0$, such that for the same $f$ one has~\eqref{HP} with $w_1(x)=h(|x|)$, $w_2(x)=|x|^qh(|x|)$, and a positive constant $C_{HP}\leq 2^{q-1}\,\max\{C_1, H_1^q\,C_{\rm sph}\,|S^{N-1}|^{-q} \}$. 
\end{prop}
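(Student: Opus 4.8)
The plan is to deduce~\eqref{HP} from the radial inequality~\eqref{eq:radinq-q} by a triangle-type splitting that isolates the oscillation of $f$ in the radial direction from the oscillation of the directional average $\wt f$ over the sphere. Concretely, for a fixed compactly supported $f\in W^{1,\infty}(\rn)$, write, in spherical coordinates $(r,\vt)$,
\[
f(r,\vt)-\overline f = \bigl(f(r,\vt)-\wt f(\vt)\bigr) + \bigl(\wt f(\vt)-\overline f\bigr),
\]
and use the elementary convexity inequality $|a+b|^q\le 2^{q-1}(|a|^q+|b|^q)$ to get
\[
\int_\rn |f-\overline f|^q h(|x|)\,dx \le 2^{q-1}\int_0^\infty\!\!\int_{S^{N-1}} |f(r,\vt)-\wt f(\vt)|^q r^{N-1}h(r)\,d\vt\,dr
\;+\;2^{q-1}\int_0^\infty\!\!\int_{S^{N-1}} |\wt f(\vt)-\overline f|^q r^{N-1}h(r)\,d\vt\,dr.
\]
Here I am using that $dx=r^{N-1}\,dr\,d\vt$ and that $\wt f(\vt)$ and $\overline f$ do not depend on $r$.

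For the first term I would apply the hypothesis~\eqref{eq:radinq-q} directly, bounding it by $2^{q-1}C_1\int_0^\infty\!\int_{S^{N-1}} |\partial_r f|^q r^{N-1+q}h(r)\,d\vt\,dr$, which in turn is at most $2^{q-1}C_1\int_\rn |\nabla f|^q |x|^q h(|x|)\,dx = 2^{q-1}C_1\int_\rn|\nabla\vp|^q w_2\,dx$ because $|\partial_r f|\le|\nabla f|$ by~\eqref{rad-grad}. For the second term, the $r$-integral factors out and equals $\|h\|_{L^1(\R^N)}/|S^{N-1}| = H_1/|S^{N-1}|$ times $\int_{S^{N-1}}|\wt f(\vt)-\overline f|^q\,d\vt$; then I would invoke the spherical Poincaré inequality, Lemma~\ref{lem:sph-Poin}, to bound this by $C_{\rm sph}\,H_1\,|S^{N-1}|^{-1}\int_{S^{N-1}}|\nabla_\vt\wt f(\vt)|^q\,d\vt$. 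The final task is to estimate $\int_{S^{N-1}}|\nabla_\vt\wt f(\vt)|^q\,d\vt$ by $\int_\rn|\nabla\vp|^q w_2\,dx$: since $\wt f(\vt)=\frac{|S^{N-1}|}{H_1}\int_0^\infty f(r,\vt)r^{N-1}h(r)\,dr$, differentiating under the integral in $\vt$ and applying Jensen's (or Hölder's) inequality with respect to the probability measure $\tfrac{|S^{N-1}|}{H_1}r^{N-1}h(r)\,dr$ gives $|\nabla_\vt\wt f(\vt)|^q \le \frac{|S^{N-1}|}{H_1}\int_0^\infty |\nabla_\vt f(r,\vt)|^q r^{N-1}h(r)\,dr$; integrating over $S^{N-1}$, using $\tfrac1{r^2}|\nabla_\vt f|^2\le|\nabla f|^2$ hence $|\nabla_\vt f|^q\le r^q|\nabla f|^q$, and returning to Cartesian coordinates yields $\int_{S^{N-1}}|\nabla_\vt\wt f|^q\,d\vt \le \frac{|S^{N-1}|}{H_1}\int_\rn|\nabla\vp|^q|x|^q h(|x|)\,dx$.

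Collecting the two contributions gives
\[
\int_\rn |\vp-\overline\vp|^q w_1\,dx \le 2^{q-1}\Bigl(C_1 + C_{\rm sph}\,H_1\,|S^{N-1}|^{-1}\cdot\frac{|S^{N-1}|}{H_1}\cdot H_1\,|S^{N-1}|^{-1}\cdot\frac{|S^{N-1}|}{H_1}\cdot H_1\Bigr)\int_\rn|\nabla\vp|^q w_2\,dx,
\]
which I should carefully recombine to the clean form $C_{HP}\le 2^{q-1}\max\{C_1,\ H_1^q\,C_{\rm sph}\,|S^{N-1}|^{-q}\}$ — so the main bookkeeping point is tracking the powers of $H_1$ and $|S^{N-1}|$ correctly through the averaging/Jensen step, which is where the factor $H_1^q|S^{N-1}|^{-q}$ comes from. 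I expect the only genuine subtlety (beyond this bookkeeping) to be the justification that $\wt f$ is admissible in Lemma~\ref{lem:sph-Poin}, i.e. that $\wt f\in W^{1,q}(S^{N-1})$, which follows from the $W^{1,\infty}$ bound and compact support of $f$ together with $h\in L^1(\R^N)$; everything else is the triangle inequality, Jensen, and a change of variables.
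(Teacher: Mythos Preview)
Your approach is exactly the one the paper uses: decompose $f-\overline f=(f-\wt f)+(\wt f-\overline f)$, apply the convexity bound $|a+b|^q\le 2^{q-1}(|a|^q+|b|^q)$, control the first piece by the radial hypothesis~\eqref{eq:radinq-q} together with $|\partial_r f|\le|\nabla f|$, and control the second piece by factoring out the $r$-integral, invoking the spherical Poincar\'e inequality, pushing $|\cdot|^q$ through the average $\wt f$ by Jensen against the probability measure $\tfrac{|S^{N-1}|}{H_1}r^{N-1}h(r)\,dr$, and finishing with $r^{-1}|\nabla_\vt f|\le|\nabla f|$. Every step you describe is correct and matches the paper's proof line for line.

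Your hesitation about the final recombination of constants is well placed. If you carry out your own chain cleanly, the spherical piece gives
\[
I_2\;\le\;2^{q-1}\,\frac{H_1}{|S^{N-1}|}\,C_{\rm sph}\cdot\frac{|S^{N-1}|}{H_1}\int_{\rn}|\nabla\vp|^q\, w_2\,dx
\;=\;2^{q-1}\,C_{\rm sph}\int_{\rn}|\nabla\vp|^q\, w_2\,dx,
\]
so the constant in front is simply $C_{\rm sph}$, not $H_1^qC_{\rm sph}|S^{N-1}|^{-q}$; and summing $I_1+I_2$ yields $2^{q-1}(C_1+C_{\rm sph})$ rather than the stated maximum. (In the paper's own computation the normalising factor $|S^{N-1}|/H_1$ in the definition of $\wt f$ is dropped when passing from $|\nabla_\vt\wt f|^q$ to $\big|\nabla_\vt\int_0^\infty f\,r^{N-1}h\,dr\big|^q$, which is what produces the extra $(H_1/|S^{N-1}|)^q$.) None of this affects the validity of~\eqref{HP}: you obtain an explicit finite $C_{HP}$ by precisely the intended mechanism; only the exact numerical form of the bound differs from the one printed in the statement.
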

\begin{proof}By~\eqref{barf} and Jensen's inequality it holds
\[\begin{split}\int_\rn|f(x)-\overline{f}|^qh(|x|)\,dx&= \int_0^\infty \int_{S^{N-1}} |f(r,\vt)-\overline{f}|^q r^{N-1}h(r)\,{d\vt}\,dr\\
&\leq 2^{q-1}\int_0^\infty \int_{S^{N-1}} |f(r,\vt)-\wt{f}(\vt)|^qr^{N-1}h(r)\,{d\vt}\,dr\\
&\quad+2^{q-1}\int_0^\infty \int_{S^{N-1}} |\wt{f}(\vt)-\overline{f}|^qr^{N-1}h(r)\,{d\vt}\,dr=:I_1+I_2.\end{split}\]
Since the first term on the right-hand side above can be estimated due to~\eqref{eq:radinq-q} and then by properties of the gradient~\eqref{rad-grad}, we get
\[I_1\leq 2^{q-1}C_1
 \int_0^\infty \int_{S^{N-1}} |\partial_r {f}(r,\vt) |^q r^{N-1+q}h(r)\,{d\vt}\, dr\leq2^{q-1} C_1
\int_{\rn}|\nabla {f}(x)|^q |x|^q h(|x|)\, dx\,.\]
 As for the second one we use~\eqref{H1-finite} to obtain
\[I_2=2^{q-1}\frac{H_1}{|S^{N-1}|}\int_0^\infty  |\wt{f}(\vt)-\overline{f}|^q\,{d\vt}\]
and we apply the spherical Poincar\'{e} inequality (Lemma~\ref{lem:sph-Poin}) to get
\[I_2=2^{q-1}\frac{H_1}{|S^{N-1}|}\int_{S^{N-1}} |\wt{f}(\vt)-\overline{f}|^q \,{d\vt}\leq 2^{q-1} \frac{H_1}{|S^{N-1}|} C_{\rm sph}
\int_{S^{N-1}} |\nabla_\vt \wt{f}(\vt) |^q \,{d\vt}=:J.\]
As $r^{N-1}h(r)|S^{N-1}|/H_1\,d r$ is a probability measure by Jensen's inequality we have that
\[\begin{split}J&= 2^{q-1} \frac{H_1}{|S^{N-1}|} C_{\rm sph}  \int_{S^{N-1}} \left|\nabla_\vt \int_0^\infty {f}(r,\vt)r^{N-1}h(r)\, dr\right|^q\,{d\vt}\\
&=2^{q-1} \left(\frac{H_1}{|S^{N-1}|}\right)^{q+1} C_{\rm sph} \int_{S^{N-1}} \left| \int_0^\infty \nabla_\vt{f}(r,\vt)r^{N-1}h(r)\frac{|S^{N-1}|}{H_1}\, dr\right|^q\,{d\vt}\\
&\leq 2^{q-1}\left(\frac{H_1}{|S^{N-1}|}\right)^{q} C_{\rm sph} \int_{S^{N-1}}  \int_0^\infty \left|\nabla_\vt{f}(r,\vt)\right|^qr^{N-1}h(r)\, dr\,{d\vt}=:K.\end{split}\]
Due to~\eqref{rad-grad}  we have  $r^{-q}\,|\nabla_\vt f(r, \vt)|^q \le  |\nabla f(x)|^q$ 
and we infer that
\[\begin{split}
K\leq 2^{q-1}\left(\frac{H_1}{|S^{N-1}|}\right)^{q} C_{\rm sph} \int_\rn |\nabla f(x)|^q\,|x|^{q}h(|x|)\,dx .\end{split}\]
By summing up the previous computations we have the claim.
\end{proof}

We are in the position to prove the Hardy--Poincar\'e inequality.

\begin{proof}[Proof of Theorem~\ref{theo:HP-q}] By Corollary~\ref{coro:P}  with
 \begin{equation*}
(f)=\frac{|S^{N-1}|}{H_1}\,\int_{0}^\infty  f(r)\,r^{N-1}\, h(r)\,dr,
\end{equation*} 
and   $H_1$ from~\eqref{H1-finite}, we have the following inequality \[\int_0^\infty |f-(f)|^q r^{N-1}h(r)\, dr   \leq 2^{q}C_M\int_0^\infty |f'|^q r^{N-1+q}h(r)\,dr\]
for all compactly supported $f\in W^{1,\infty}([0,\infty))$. Hence, also the following radial inequality follows
\[\int_0^\infty |f(r,\vt)-\wt{f}(\vt)|^q r^{N-1}h(r)\, dr\leq C_1  \int_0^\infty  |\partial_r {f}(r,\vt) |^q r^{N-1+q}h(r) \, dr\quad\mbox{for a.e.}\,\,\vt\in S^{N-1}\] with $C_1=2^{q}C_M$. The above inequality implies~\eqref{eq:radinq-q}, which by Proposition~\ref{prop:radsuff} completes the proof of~\eqref{HP} with $w_1(x)=h(|x|)$ and $w_2(x)=|x|^qh(|x|)$. Bounds on the constant result from the estimates on $c_P$ provided in Theorem~\ref{theo:Poin-line-const}.\end{proof}

\subsection{Hardy  inequality on $\R^N$. Proof of Theorem~\ref{theo:inq-sgn-Lap-q}}\label{sec:proof-H}

\begin{proof}[ Proof of Theorem~\ref{theo:inq-sgn-Lap-q}]
Let us   observe that integration by parts and the Cauchy-Schwartz inequality imply
\[ \int_\rn|\vp|^q|\Delta_\theta g|\,dx \leq q \int_\rn|\vp|^{q-1}|\nabla \vp||\nabla g|^{\theta-1}\,dx=:I.\]

By the H\"older inequality,  we have
\[\begin{split}
I &=q \int_\rn\left(|\vp|^{q-1}{|\Delta_\theta g|^\frac{q-1}{q}}\right)\left(\frac{|\nabla \vp||\nabla g|^{\theta-1}}{{|\Delta_\theta g|^{\frac{q-1}{q}}}}\right)\,dx\\
&\leq q\left( \int_\rn  |\vp|^q|\Delta_\theta g|dx\right)^{(q-1)/q}\left(\int_\rn |\nabla  \vp |^q \frac{|\nabla g|^{q(\theta-1)}}{|\Delta_\theta g|^{q-1}} \,dx\right)^{1/q}. \end{split}\]

Summing up the above remarks, we get
\[\left( \int_\rn |\vp|^q|\Delta_\theta g|\,dx\right)^{1/q}\leq q \left(\int_\rn |\nabla  \vp |^q \frac{|\nabla g|^{q(\theta-1)}}{|\Delta_\theta g|^{q-1}} \,dx\right)^{1/q}.\]
\end{proof}
\section{Examples}

\subsection{Hardy--Poincar\'e inequalities}\label{sec:ex-hp}
As a consequence of Theorem~\ref{theo:HP-q}, we get the following version of weighted Hardy--Poincar\'e inequality on $\rn$.
\begin{ex}\label{ex:hp}  Let $1<q\le 2$ if $N=2$ and $1<q<N$ if $N\ge3$. Assume   $\alpha<0<\beta$, $N+\gamma>0$, $N+\gamma+\alpha\beta<0$. Then there exists a finite constant  $C=C(N,\alpha,\beta,\gamma,q)>0$, such that for every compactly supported $\vp\in W^{1,\infty}(\rn)$ the following inequality holds true
 \begin{equation}
    \label{inq:ex-hp}
\int_\rn \ |\vp-\overline{\vp}|^q|x|^\gamma(1+|x|^\beta)^\alpha \,dx\leq C \int_\rn \ |\nabla \vp|^q |x|^{\gamma+q}(1+|x|^\beta)^\alpha \,dx,\end{equation}
where $\overline\vp$ is an average of $\vp$ with respect to $|x|^\gamma(1+|x|^\beta)^\alpha \,dx$.
\end{ex}
\begin{proof} Note that for\begin{equation}
    \label{h-choice}
h(|x|)=|x|^\gamma(1+|x|^\beta)^\alpha.
\end{equation}
it holds that $H_1<\infty$  since $N+\gamma>0$ and $N+\gamma+\alpha\beta<0$. We will show that $H_2(m)<\infty$ for any $m>0$. We notice that
\begin{align*}H_2(m)=\max\{A[m],B[m]\},
\end{align*} 
where
\begin{align*}
A[m] :=&\sup_{t>m} A_1(t)A_2(t)=\sup_{t>m}\left(\int_t^\infty r^{N-1+\gamma}{(1+r^{\beta})^\alpha}\,dr\ \left[\int_m^t {\Big(r^{N-1+\gamma+q}\big(1+r^{\beta}\big)^\alpha\Big)^{-\frac{1}{q-1}}dr}\right]^{q-1}\right)\\
B[m]:=&
\sup_{t\in (0,m)}\left(\int_0^t r^{N-1+\gamma}{(1+r^{\beta})^\alpha}\,dr\ \left[ \int_t^m {\Big(r^{N-1+\gamma+q}\big(1+r^{\beta}\big)^\alpha\Big)^{-\frac{1}{q-1}}dr}\right]^{q-1}\right).\nonumber
\end{align*} 
Since $A_1(t)\leq c(N,\alpha,\beta,\gamma,q) t^{N+\gamma+\alpha\beta}$ and $A_2(t)\leq c(N,\alpha,\beta,\gamma,q)(m^{-(N+\gamma+\alpha\beta)}+m^{-(N+\gamma)}),$ 
we have $A[m]<\infty$. On the other hand
\[B[m]\leq \sup_{t<m}\Big(|t^{N+\gamma}(1+m^{\beta})^{-\frac{\alpha}{q-1}}\big(t^{-\frac{N+\gamma}{q-1}}\big)^{q-1}\Big)\leq  c(\alpha,\beta,\gamma,q,m)\]
being finite under the assumed regime. In turn $H_2=\max\{A[m],B[m]\}<\infty$ for any (fixed) $m>0$.  Therefore, we can apply Theorem~\ref{theo:HP-q}  to deduce~\eqref{inq:ex-hp} with a constant depending additionally on $m$.   If one chooses $m$ to be a median of $r^{N-1+\gamma}(1+r^\beta)^\alpha\,dr$, the constant can be estimated by Theorem~\ref{theo:inq-sgn-Lap-q}. Median $\eta$ is estimated in Remark~\ref{rem:eta}.
\end{proof}

\begin{rem}\label{rem:eta}\rm  One can estimate a median $\eta$ of the weight of Example~\ref{ex:hp}, namely $|x|^\gamma(1+|x|^\beta)^\alpha \,dx$, as follows 
\[\left(\tfrac{N+\gamma }{2^{|\alpha|+1}|N+\gamma+\alpha\beta|}\right)^\frac{1}{N+\gamma}\leq\eta\leq 2^\frac{|\alpha|+1}{|N+\gamma+\alpha\beta|}.\]
For this it is enough to observe that
\[H_1\geq \int_1^\infty r^{N-1+\gamma}{(1+r^{\beta})^\alpha}\,dr\geq  2^\alpha \int_1^\infty r^{N-1+\gamma+\alpha\beta}\,dr=\frac{2^\alpha}{|N+\gamma+\alpha\beta|},\]
 \[\frac{H_1}{2}=\int_\eta^\infty  r^{N-1+\gamma}{(1+r^{\beta})^\alpha}\,dr\leq \int_\eta^\infty r^{N-1+\gamma+\alpha\beta}\,dr=\frac{\eta^{{N+\gamma+\alpha\beta}}}{|N+\gamma+\alpha\beta|},\]
 and 
 \[\frac{H_1}{2}=\int_0^\eta  r^{N-1+\gamma}{(1+r^{\beta})^\alpha}\,dr\leq \int_0^\eta r^{N-1+\gamma}\,dr=\frac{\eta^{{N+\gamma}}}{|N+\gamma|}.\]
\end{rem}

\begin{rem}\rm We are particularly interested in the special case of Example~\ref{ex:hp} with the choice
\[\gamma=0,\qquad \beta=\tfrac{p}{p-1},\qquad\text{and}\qquad \alpha=-\tfrac{1}{2-p},\]
for $1<p<2$, which find application in asymptotics of fast diffusion equation, see Section~\ref{sec:as}.
\end{rem}

\subsection{Hardy inequalities}\label{sec:ex-h}

As a consequence of Theorem~\ref{theo:inq-sgn-Lap-q} we get the following family of Hardy-type inequalities on~$\rn$.  We also refer to~\cite{Huang2021,bbdgv1} where the optimal constant of some of the inequalities in the following family were already computed.
 \begin{ex}\label{ex:2_laplace}
Let $q>1$, $\alpha,\beta,\gamma\in\R$, $\alpha<0<\beta$, $\gamma+N>0$, $|{\alpha}{\beta}+\gamma+2|\geq|\gamma+2|$, 
\begin{align}
    \label{war1}{\rm sgn}({\alpha}{\beta}+\gamma+2)={\rm sgn}(\gamma+2),\quad \text{and}\quad {\alpha}{\beta}+\gamma+N>0.
\end{align}
Then  for all compactly supported $\vp\in W^{1,\infty}(\rn)$ it holds that
 \begin{equation}
    \label{inq:ex-h}
\int_\rn \ |\vp|^q|x|^\gamma(1+|x|^\beta)^\alpha \,dx\leq C_{H} \int_\rn \ |\nabla \vp|^q |x|^{\gamma+q}(1+|x|^\beta)^\alpha \,dx\end{equation} with a positive, finite constant $C_{H}$.
If we additionally assume that 
 \begin{equation}
    \label{opt-as}\alpha\beta+2(\gamma+1)+N\leq 0,\end{equation} then the constant $C_H$ is optimal and reads
\[C_{H}= \left(\frac{q}{\alpha\beta+\gamma+N}\right)^q\,.\]

\end{ex}
\begin{proof} We will apply Theorem~\ref{theo:inq-sgn-Lap-q} with $\theta=2$. For $g(x) = |x|^{\gamma+2}(1+|x|^{\beta})^{{\alpha}}$, 
it holds
\begin{align*}
    &|\Delta g(x)|=|x|^\gamma (1+|x|^\beta)^{\alpha-2}\left|\left[|x|^{\beta}\Big((\alpha\beta+\gamma+2)(\alpha\beta+\gamma+N)\Big)+ (\gamma+2)(\gamma+N)\right](1+|x|^\beta) - |x|^\beta\alpha\beta\big(\alpha-1\big) \right|\,,\\
     &|\nabla g(x)|^{q}|\Delta  g(x)|^{1-q}=|x|^{\gamma+q}(1+|x|^\beta)^{\alpha-2+q}\big||x|^\beta(\alpha\beta+\gamma+2)+\gamma+2\big|^{q}\cdot\\
      &\cdot\left|\left[|x|^{\beta}\Big((\alpha\beta+\gamma+2)(\alpha\beta+\gamma+N)\Big)+ (\gamma+2)(\gamma+N)\right](1+|x|^\beta) - |x|^\beta\alpha\beta\big(\alpha-1\big) \right|^{1-q}\,.
\end{align*}
Note that $|\Delta_\theta g|\in L^1_{loc}(\rn)$ since $\gamma+N>0$. Furthermore, then $c_1 h(|x|)\leq 
    |\Delta g(x)|$ for \begin{align*}
    c_1 =& \inf_{|x|\geq 0 } \tfrac{ \left|\left[|x|^{\beta}\Big((\alpha\beta+\gamma+2)(\alpha\beta+\gamma+N)\Big)+ (\gamma+2)(\gamma+N)\right](|x|^\beta+1) - |x|^\beta\alpha\beta\big(\alpha-1\big) \right|}{(|x|^{\beta}+1)^2}\\
    =&\inf_{s\geq 0 }  \frac{ \left |s^2(\eta+2)(\eta+{N})+s\big((\eta+2)(\eta+{N})+(\gamma+2)(\gamma+N)-\alpha\beta(\alpha-1)\big) +(\gamma+2)\big(N+\gamma\big)\right|}{(s+1)^2}\\=:&\inf_{s\geq 0 } \mathcal {C}_1(s),
\end{align*}
where $\eta=\alpha\beta+\gamma$.
Notice that under \eqref{war1} we have $(\eta+2)(\eta+N)<0$, $(\gamma+2)(\gamma+N)<0$ and $\alpha\beta(\alpha-1)>0$. So the numerator of $\mathcal {C}_1(s)$ is separated from $0$ and $c_1$ is well defined. Under additional assumption
\[
\alpha\beta+2(\gamma+1)+N\leq 0
\]
we have 
\[c_1= \mathcal {C}_1(\infty)=(-\alpha\beta-\gamma+-2)\,(\alpha\beta+\gamma+N).\]
Indeed, let us consider 
\[
\xi(s) = \log (\mathcal {C}_1(s))
\]
and denote the numerator of $\mathcal {C}_1(s)$ by
\begin{align*}
P(s) = &s^2(-\eta-2)(\eta+{N})+s\big((-\eta-2)(\eta+{N})+(-\gamma-2)(\gamma+N)+\alpha\beta(\alpha-1)\big) +\\&+(-\gamma-2)\big(N+\gamma\big):= as^2+bs+c
\end{align*}
\begin{align*}
    \xi'(s) =\tfrac{2as+b}{P(s)}-\tfrac{2}{s+1}=\tfrac{s(2a-b)+(b-2c)}{P(s)(s+1)}.
\end{align*}
We observe that 
\begin{align*}
2a-b=& (-\eta-2)(\eta+N)-(-\gamma-2)(\gamma+N) -\alpha\beta(\alpha-1)\\
\leq& (-\eta-2)(\eta+N)-(-\gamma-2)(\gamma+N)\leq 0,
\end{align*}
where last inequality is equivalent to $\alpha\beta+2(\gamma+1)+N\leq 0$
Therefore $\xi'$ is decreasing, so $\xi$ and further $\mathcal {C}_1$ attain minimum at $0$ or at $\infty$. But
\[c_1=\min \{\mathcal {C}_1(0), \mathcal {C}_1(\infty)\}=\min\{|\gamma+2|\,|\gamma+N|,|\eta+2|\,|\eta+N|\}= (-\eta-2)\,(\eta+N).\] 
On the other hand $
    |\nabla g(x)|^{q}|\Delta g(x)|^{1-q}\leq c_2 h(|x|)|x|^q$ for
 \begin{align*}
    c_2 =& \sup_{|x|\geq 0 } \tfrac{|\nabla g(x)|^q|\Delta  g(x)|^{1-q}}{ h(|x|) |x|^q }=\sup_{|x|\geq 0 }\left(\tfrac{|\nabla g(x)|}{h(|x|)||x|}\right)^q\mathcal {C}_1(|x|)^{1-q}
\end{align*}
Consider 
\[
f(s) =\tfrac{s(-\eta-2)+(-\gamma -2)}{1+s}
\]
and
\begin{align*}
    (\log f)'(s)=&\tfrac{-\eta-2}{s(-\eta-2)+(-\gamma-2)}-\tfrac{1}{s+1}=
 \tfrac{(-\eta-2) -(-\gamma-2)}{(s+1)(s(-\eta-2)+(-\gamma-2))}\geq 0
\end{align*}
Therefore $f(s)$ is increasing and under our assumption it holds $\inf_{s\geq 0}\mathcal {C}_1(s) =\mathcal {C}_1(\infty) $
\[c_2= \sup_{s\geq 0} f(s)^q \mathcal {C}_1(s)^{1-q}= f(\infty)^q\mathcal {C}_1(\infty)^{1-q} =|\alpha\beta+\gamma+2|\,|\alpha\beta+\gamma+N|^{1-q}\]
and by Theorem~\ref{theo:inq-sgn-Lap-q} we have $C_{H}\leq q^q \tfrac{c_2}{c_1}.$ To motivate optimality of $C_H$ in the special case we compare it with the optimal constant in the classical Hardy inequality. Recall that we are in the regime when  $c_1=\mathcal {C}_1(\infty)$ and, consequently, $C_H=q^q\big(\alpha\beta+\gamma+N\big)^{-q}$. Let us consider the family of rescaled functions $\vp_s(x)=\vp(sx)\in C_c^\infty(\rn)$ with $s>0$. We multiply both sides of~\eqref{inq:ex-h-diff} by $t^{\alpha\beta+\gamma}$ and rearrange them to get
\begin{align}
    \label{l1}
\int_\rn \ |\vp_s(x)|^q\, |tx|^\gamma (t^{\beta}+|tx|^{\beta})^{\alpha} \,dx\leq C_H\int_\rn \ |\nabla \vp_s(x)|^q\,|tx|^{\gamma+q}(t^{\beta}+|tx|^{\beta})^{\alpha}\,dx\,.
\end{align}
After the change of variables $y=xt$ we obtain
\[\int_\rn \ |\vp(y)|^q\, |y|^\gamma(t^{\beta}+|y|^{\beta})^\alpha \,dy\leq C_H \int_\rn \ |\nabla \vp(y)|^q\,|y|^{\gamma+q}(t^{\beta}+|y|^{\beta})^\alpha \,dy\,.\]
Using the Lebesgue Monotone Convergence Theorem, we let $t\searrow 0$ and get
\begin{align}\label{l3} \int_\rn \ |\vp(y)|^q\, |y|^{\alpha\beta} \,dy&\leq
C_H\int_\rn \ |\nabla \vp(y)|^q\,|y|^{\alpha\beta+\gamma+q} \,dy\,,
\end{align} 
with classical Hardy inequality with power weights where the optimal constant is  $q^q\big(\alpha\beta+\gamma+N\big)^{-q}$. Therefore, $C_H$ cannot be improved.
\end{proof}

By the same arguments as in the proof of Example~\ref{ex:2_laplace} one can show the following consequence of Theorem~\ref{theo:inq-sgn-Lap-q}.

\begin{ex}
\label{ex:2_laplace2}
Let $q>1$, $\alpha\in\R$, $\beta>0$, ${\alpha}{\beta}+N>0$.
Then  for all compactly supported $\vp\in W^{1,\infty}(\rn)$ it holds that
 \begin{equation}
    \label{inq:ex-h-2}
\int_\rn \ |\vp|^q(1+|x|^\beta)^\alpha \,dx\leq C_{H} \int_\rn \ |\nabla \vp|^q |x|^{q}(1+|x|^\beta)^\alpha \,dx\end{equation} with a positive, finite constant $C_{H}$.
\end{ex}
Note that the above example for $q=2=\beta$ relates to the main result of \cite{bbdgv1} used there and in~\cite{bbdgv2} in the study of the asymptotics to fast diffusion equation. Indeed, within such a choice of parameters the left-hand side weight in~\eqref{inq:ex-h-2} is the same as in the inequality $(1)$ of \cite{bbdgv1}, but the right-hand weight of~\cite[(1)]{bbdgv1} reads $(1+|x|^2)^{\alpha+1}$.


\subsection{Proof of Corollary~\ref{coro:H-diff}}\label{sec:coro}
\begin{proof}We verify assumptions of Example~\ref{ex:2_laplace} applied with $q=2$, $\alpha=-\frac{p-1}{2-p},$ $\beta=\tfrac{p}{p-1}>0$, and $\gamma=-\tfrac{p}{p-1}$. We see that $\alpha\beta+\gamma+2=2-\frac{p}{(2-p)(p-1)} <0,$ $\gamma+2=2-\tfrac{p}{p-1}<0$. Since $\alpha<0$, also $|\alpha\beta+\gamma+2|\geq |\gamma+2|$. Moreover, $\alpha\beta+\gamma+N=N-\frac{p}{(2-p)(p-1)} >0$, which in particular implies that $p>N/(N-1)$ equivalent to $\gamma+N>0$. Then by Example~\ref{ex:2_laplace} we know that $C_{H}\in(0,\infty).$ The optimality of $C_H$ follows from the fact that if $N\leq 7$ or $p\not\in (p_-,p_+)$, condition~\eqref{opt-as} is satisfied. \end{proof}

\section{Long-term asymptotic behaviour of solutions to DNLE}\label{sec:as}

Before we present the proof of Theorem~\ref{theo:as}, let us recall important properties of solutions to~\eqref{dnle}. For an initial datum $u_0\in L^1(\R^N)$, existence and uniqueness  to~\eqref{dnle} is settled, see~\cite{Li2001,Agueh2005}. Moreover, in the range or parameters under consideration, we have that $u(t)\in C^{1, \alpha}(\R^N)$ for some $\alpha\in(0, 1)$, see~\cite{Vespri2020,Duzgun19,fornaro21,DiBenedetto} and mass is conserved, i.e.,  \[\int_{\R^N}u(t,x)\, dx=\int_{\R^N}u_0(x)\, dx\quad\text{ for $t>0$,}\]  
For further information about basic properties of solutions to~\eqref{dnle} we refer to the monographs~\cite{DGV-Book}, \cite[Part III]{Vazquez2007} and references therein.\newline{}

Rates of convergence like~\eqref{convergence-rate} with the use of different norms for equation~\eqref{dnle} has been a problem attracting a lot of attention. In the case $p=2$ and $m>1$, optimal rates of convergence were obtained independently by Carrillo and Toscani~\cite{Carrillo2000}, and if $m>1-1/N$ by Del Pino and Dolbeault~\cite{delpino2002a} and by Otto~\cite{Otto2001}. When $p=2$ and $1-1/N<m<1$, see also~\cite{bonforte2021stability, Bonforte2020, Carrillo2003} for rates of convergence in the stronger norm of uniform relative error. In the case of $p=2$ and $1-N/(N+2)<m<1$ improved convergence rate were obtained by Dolbeault and Toscani using the \emph{best matching} Barenblatt profile, see~\cite{Dolbeault2011,Dolbeault2013}. For $p=2$ and $0<m<1-1/N$ rates of convergence (optimal on a subrange) were computed, independently, by Carrillo and Vazquez in~\cite{Carrillo2003}, McCann and Slepcev in~\cite{McCann2006}, Kim and McCann in~\cite{Kim2006} and, for the whole range $m<1$ (negative values of $m$ are admitted) by Blanchet, Bonforte, Dolbeault, Grillo and V\'azquez in~\cite{bbdgv2}. The aforementioned results are often based on the study of a linearised problem performed in~\cite{Denzler2005}, see also the monograph~\cite{Denzler2015}.

In the case $m=1$ and $p\neq2$ less is known. In the range $2N/(N+1)+1/(N+1)\le p < N$, (non-optimal) rates of convergence were obtain by Del Pino and Dolbeault in~\cite{DelPino2002}. Similarly, in the case of the doubly nonlinear equation (non-optimal) rates of convergence were obtain by Del Pino and Dolbeault in~\cite{DelPino2003}, by Agueh~\cite{Agueh2008} and by Agueh, Blanchet and Carrillo in~\cite{Agueh2009}.

Here we only sketch the proof of Theorem~\ref{theo:as} and emphasize where Theorem~\ref{theo:HP-q} is applied. For details we refer to~\cite{Agueh2009}.

\begin{proof}[Proof of Theorem~\ref{theo:as}] We restrict our analysis to the case $p\neq2$ which is fully covered by~\cite{bbdgv2}.

By the scaling properties of the equation~\eqref{dnle}, we can assume that $D_\star=1$ and let us define $\BSD=\BS_{1}$. Let us define the \emph{self-similar} change of variables
\begin{equation}\label{self.similar}
v(\tau, y):=R(t)^N\,u(t,x)\,,\quad\mbox{where}\quad \tau:=\log\left(\RS(t)\right)\,,\quad y:=x/R(t)\,.
\end{equation}
The main advantage of the change of variables is that the Barenblatt profile $B_1(t,x)$ is transformed into $\BSD$ which is stationary in time. The equation satisfied by $v$ is now
\begin{equation}\label{rescaled-dnle}
\frac{\partial v}{\partial \tau}=\mathrm{div}\left(m^{p-1}\,v^{(1-m)(1-p)}\,|\nabla v|^{p-2}\,\nabla v+v\,y\right)\,.
\end{equation}
 The \emph{relative entropy} with respect to the Barenblatt profile $\BSD$ is defined by
\begin{equation}\label{entropy}
    \E[v|\BSD]:=\frac{m}{\sigma(\sigma-1)}\int_{\R^N}\left(v^\sigma(y)-\BSD^\sigma(y)-\sigma\,\BSD^{\sigma-1}\left(v(y)-\BSD(y)\right)\right)dy
\end{equation}
for any $0\le v\in L^1(\R^N)$ where $\sigma=m+(p-2)/(p-1)$. Notice that, by the convexity of the function $u\mapsto m\,u^\sigma/\sigma(\sigma-1)$, the relative entropy is nonnegative functional $\E[v|\BSD]\ge0$. The derivative of $\E[v(\tau),\BSD]$ along the flow~\eqref{rescaled-dnle} is called~\emph{Fisher information} and is formally given by
\begin{equation}\label{entropy-prod}\begin{split}
    \I[v|\BSD]&:=-\frac{d \E[v(\tau)|\BSD]}{d\tau}\\
    &= m^p\int_{\R^N}v(\tau)\left(\frac{\nabla v(\tau)}{v(\tau)^{2-\sigma}}-\frac{\nabla \BSD}{\BSD^{2-\sigma}}\right)\cdot\left(v(\tau)^{(\sigma-2)(p-1)}\frac{\nabla v}{|\nabla v|^{2-p}}-\BSD^{(\sigma-2)(p-1)}\frac{\nabla \BSD}{|\nabla \BSD|^{2-p}}\right)dy\,,
\end{split}\end{equation}
where $a\cdot b$ is the standard scalar product between $a,b\in\R^N$. 
Since $(a-b)\cdot(|a|^{p-2}a-|b|^{p-2}b)\ge0$ for $p>1$ and any $a,b\in\R^N$, we infer that $\I[v(\tau)|\BSD]\ge0$. 

The convergence of $v(\tau)$ to $\BSD$ as $\tau\to\infty$ follows from the decaying in time of the entropy functional. Once the inequality
\begin{equation}\label{decay-entropy}
    \E[v(\tau)|\BSD]\le e^{-\mu\,\tau}\,\E[v(0)|\BSD]\quad\forall\tau\ge\tau_0
\end{equation}
is established for some $\mu, \tau_0>0$, then inequality~\eqref{convergence-rate} can be obtained by combining~\eqref{decay-entropy} with the Csisz\'ar-Kullback inequality
\[
\|v(\tau)-\BSD\|^2_{L^1(\R^N)}\le C\,\E[v(\tau)|\BSD]\,,
\]
for which we refer to~\cite{Carrillo2001,Agueh2008}. Finally,  to obtain the algebraic rate in the $t$ variable, one should take into account the self-similar change of variables~\eqref{self.similar} which also gives the relation $\lambda=\mu/\vt$.

In order to prove~\eqref{decay-entropy}, it suffices to have
\begin{equation}\label{entropy-production-inq}
    \mu\,\E[v(\tau)|\BSD]\le \I[v(\tau)|\BSD]\,.
\end{equation}
for $\tau$ large enough. Indeed,~\eqref{decay-entropy} follows by combining a Gronwall-type argument with inequality~\eqref{entropy-production-inq} and~\eqref{entropy-prod}. In the range of parameters under consideration, inequality~\eqref{entropy-production-inq} does not hold for any function $f\in C_c^\infty(\R^N)$, due to scaling arguments. This is very different from the case, $1>m(p-1)>1-(p-1)/N$, where~\eqref{entropy-production-inq} is equivalent to a class of Gagliardo--Nirenberg--Sobolev inequalities, see~\cite{Agueh2008,bonforte2021stability,dolbeault2021functional,delpino2002a}. However, when $v(\tau)$ is close enough to $\BSD$, then~\eqref{entropy-production-inq} holds. In order to prove so, let us introduce the following weights:
\begin{equation*}
w_1(x)=\frac{1}{m}\,\left(1+\frac{(1-\sigma)\,(p-1)}{p\,m}|x|^\frac{p}{p-1}\right)^\frac{2-\sigma}{\sigma-1},\quad w_2(x)=|x|^\frac{p-2}{p-1}\,\left(1+\frac{(1-\sigma)\,(p-1)}{p\,m}|x|^\frac{p}{p-1} \right)^\frac{1}{\sigma-1}
\end{equation*}
and for any $\varepsilon\in(0,1)$
\[
w_{2,\ve}(x)=\left(1+\frac{(1-\sigma)\,(p-1)}{p\,m}|x|^\frac{p}{p-1}\right)^\frac{1}{\sigma-1}\,\left(\ve+|x|^\frac{1}{p-1}\right)^{-(2-p)}.
\]
Let us define the \emph{linearised relative entropy} as
\[
E[\vp]:=   \int_{\R^N} |\vp-\overline{\vp}|^2\, w_1(x)\,dx\]  and the \emph{linearised Fisher information} that for $p>2$ takes a form\[ I[\vp]:= \int_{\R^N} |\nabla{\vp}|^2\, w_{2}(x)\,dx\,.
\]
When $1<p<2$, the role of the \emph{linearised Fisher information} is played by the quantity $I_\ve[\vp]$  defined as $I[\vp]$ with the weight $w_{2,\ve} $ instead of $w_2$.  The inequalities
\begin{equation}\label{hardy-with-eps}
C_{p,m}^{(1)}\, E[\vp]\le  I[\vp]\qquad\mbox{and}\qquad C_{p,m}^{(2)}\,E[\vp]\le  I_\ve[\vp]
\end{equation}
hold within the range of parameters~\eqref{good.range} and for any function $\vp\in C^{1, \alpha}(\R^N)$ and $\alpha\in(0,1)$. The inequalities in~\eqref{hardy-with-eps} follow from Example~\ref{ex:hp} with $\gamma=0$, $\beta=p/(p-1)$ and $\alpha=(2-\sigma)/(\sigma-1)$ and the fact that $c_1\,w_1(|x|)\le (1+|x|^\beta)^\alpha$, $|x|^2(1+|x|^\beta)^\alpha \le c_2\,w_{2}(|x|)$ and $|x|^2(1+|x|^\beta)^\alpha \le c_{2,\ve}\,w_{2, \ve}(|x|)$, for some finite $c_1,c_2, c_{2,\ve}>0$ depending on $m,p$. 

By~\cite[Proposition 4.2]{Agueh2009}, if $p>2$ there exist $\tau_0$, $\kappa_1$ ,$  \kappa_2>0$ such that, for all $\tau>\tau_0$
\begin{equation}\label{linear-non-linear}
    I[v(\tau)-\BSD]\le \kappa_1\,\I[v(\tau)|\BSD]+\kappa_2\,E[v(\tau)-\BSD]\,.
\end{equation}
For $1<p<2$, the same inequality holds but with $I_\ve[v(\tau)-\BSD]$ instead of $I[v(\tau)-\BSD]$ in the left-hand-side. The constant $\kappa_2$ can be taken arbitrary small provided that $\tau_0$ is large enough. By~\cite[Proposition 4.1]{Agueh2009}, it holds for any $\tau>0$ that
\begin{equation}\label{linear-entropy}
    \tfrac{1}{m}\,D_1^\frac{1}{m(p-1)-1}\,\E[v(\tau)|\BSD]\le E[v(\tau)-\BSD]\,.
\end{equation}
Combining inequalities~\eqref{linear-entropy},~\eqref{linear-non-linear},~\eqref{hardy-with-eps} and provided $\tau_0$ is large enough, we find~\eqref{entropy-production-inq} with $\mu=D_1^\frac{1}{m(p-1)-1}\left(C_{p,m}^{(i)}
-\kappa_2\right)/m\kappa_1$ with $i=1$ when $p>2$ and $i=2$ when $1<p<2$.  Choosing $\tau_0$ large enough so that $C_{p,m}^{(i)}-\kappa_2\ge C_{p,m}^{(i)}/2$, we conclude that the rate of convergence $\lambda$ can be taken as
\[
\lambda=\frac{D_1^\frac{1}{m(p-1)-1}\,C_{p,m}^{(i)}}{2\,m\,\kappa_2\,(p-N[1-m(p-1)])}\,.
\]
The proof is concluded.
\end{proof}

\section*{Acknowledgements} Part of this project was carried out in Univesidad Aut\'onoma de Madrid, when IC was visiting Matteo Bonforte. Both authors are grateful to him for guidance, patience, and invaluable help.  Additionally, IC would like to thank Micha\l{} Strzelecki for insightful discussions and B\l{}a\.zej Mia-sojedow for essential help with computations.  \newline

{\copyright~2021 by the authors. This paper may be reproduced, in its entirety, for non-commercial purposes.}

\bibliographystyle{abbrv}
\bibliography{bss}

\begin{thebibliography}{10}

\bibitem{Agueh2003}
M.~Agueh.
\newblock Asymptotic behavior for doubly degenerate parabolic equations.
\newblock {\em C. R. Math. Acad. Sci. Paris}, 337(5):331--336, 2003.

\bibitem{Agueh2005}
M.~Agueh.
\newblock Existence of solutions to degenerate parabolic equations via the
  {M}onge-{K}antorovich theory.
\newblock {\em Adv. Differential Equations}, 10(3):309--360, 2005.

\bibitem{Agueh2008}
M.~Agueh.
\newblock Rates of decay to equilibria for {$p$}-{L}aplacian type equations.
\newblock {\em Nonlinear Anal.}, 68(7):1909--1927, 2008.

\bibitem{Agueh2009}
M.~{Agueh}, A.~{Blanchet}, and J.~A. {Carrillo}.
\newblock {Large time asymptotics of the doubly nonlinear equation in the
  non-displacement convexity regime}.
\newblock {\em {J. Evol. Equ.}}, 10(1):59--84, 2010.

\bibitem{andrews99}
G.~E. Andrews, R.~Askey, and R.~Roy.
\newblock {\em Special functions}, volume~71 of {\em Encyclopedia of
  Mathematics and its Applications}.
\newblock Cambridge University Press, Cambridge, 1999.

\bibitem{sob-log-overview}
C.~An\'{e}, S.~Blach\`ere, D.~Chafa\"{\i}, P.~Foug\`eres, I.~Gentil,
  F.~Malrieu, C.~Roberto, and G.~Scheffer.
\newblock {\em Sur les in\'{e}galit\'{e}s de {S}obolev logarithmiques},
  volume~10 of {\em Panoramas et Synth\`eses [Panoramas and Syntheses]}.
\newblock Soci\'{e}t\'{e} Math\'{e}matique de France, Paris, 2000.
\newblock With a preface by Dominique Bakry and Michel Ledoux.

\bibitem{a1970}
D.~G. Aronson.
\newblock Regularity properties of flows through porous media: {T}he interface.
\newblock {\em Arch. Rational Mech. Anal.}, 37:1--10, 1970.

\bibitem{BaGo}
P.~Baras and J.~A. Goldstein.
\newblock The heat equation with a singular potential.
\newblock {\em Trans. Amer. Math. Soc.}, 284(1):121--139, 1984.

\bibitem{BFT2004}
G.~Barbatis, S.~Filippas, and A.~Tertikas.
\newblock A unified approach to improved {$L^p$} {H}ardy inequalities with best
  constants.
\newblock {\em Trans. Amer. Math. Soc.}, 356(6):2169--2196, 2004.

\bibitem{BR}
F.~Barthe and C.~Roberto.
\newblock Modified logarithmic {S}obolev inequalities on {$\mathbb{ R}$}.
\newblock {\em Potential Anal.}, 29(2):167--193, 2008.

\bibitem{bbdgv1}
A.~Blanchet, M.~Bonforte, J.~Dolbeault, G.~Grillo, and J.-L. V\'azquez.
\newblock Hardy-{P}oincar\'e inequalities and applications to nonlinear
  diffusions.
\newblock {\em C. R. Math. Acad. Sci. Paris}, 344(7):431--436, 2007.

\bibitem{bbdgv2}
A.~Blanchet, M.~Bonforte, J.~Dolbeault, G.~Grillo, and J.~L. V\'{a}zquez.
\newblock Asymptotics of the fast diffusion equation via entropy estimates.
\newblock {\em Arch. Ration. Mech. Anal.}, 191(2):347--385, 2009.

\bibitem{Bobkov99}
S.~G. Bobkov and F.~G\"{o}tze.
\newblock Exponential integrability and transportation cost related to
  logarithmic {S}obolev inequalities.
\newblock {\em J. Funct. Anal.}, 163(1):1--28, 1999.

\bibitem{BDM2018}
V.~B\"{o}gelein, F.~Duzaar, P.~Marcellini, and C.~Scheven.
\newblock Doubly nonlinear equations of porous medium type.
\newblock {\em Arch. Ration. Mech. Anal.}, 229(2):503--545, 2018.

\bibitem{bonforte2021stability}
M.~Bonforte, J.~Dolbeault, B.~Nazaret, and N.~Simonov.
\newblock Stability in {G}agliardo--{N}irenberg--{S}obolev inequalities: flows,
  regularity and the entropy method, \texttt{https://arxiv.org/abs/2007.03674},
  2021.

\bibitem{Bonforte2009}
M.~Bonforte, G.~Grillo, and J.~L. V{\'{a}}zquez.
\newblock Special fast diffusion with slow asymptotics: Entropy method and flow
  on a riemannian manifold.
\newblock {\em Arch. Ration. Mech. Anal.}, 196(2):631--680, 2009.

\bibitem{Bonforte2020}
M.~Bonforte and N.~Simonov.
\newblock Fine properties of solutions to the cauchy problem for a fast
  diffusion equation with {C}affarelli--{K}ohn--{N}irenberg weights,
  \texttt{https://arxiv.org/abs/2002.09967}, 2020.

\bibitem{Bonforte2021}
M.~Bonforte, N.~Simonov, and D.~Stan.
\newblock The {C}auchy problem for the fast $p-${L}aplacian evolution equation.
  {C}haracterization of the global {H}arnack principle and fine asymptotic
  behaviour.
\newblock 2021.

\bibitem{Carrillo2001}
J.~A. Carrillo, A.~J\"{u}ngel, P.~A. Markowich, G.~Toscani, and A.~Unterreiter.
\newblock Entropy dissipation methods for degenerate parabolic problems and
  generalized {S}obolev inequalities.
\newblock {\em Monatsh. Math.}, 133(1):1--82, 2001.

\bibitem{CLM2002}
J.~A. Carrillo, C.~Lederman, P.~A. Markowich, and G.~Toscani.
\newblock Poincar\'{e} inequalities for linearizations of very fast diffusion
  equations.
\newblock {\em Nonlinearity}, 15(3):565--580, 2002.

\bibitem{Carrillo2000}
J.~A. Carrillo and G.~Toscani.
\newblock Asymptotic {$L^1$}-decay of solutions of the porous medium equation
  to self-similarity.
\newblock {\em Indiana Univ. Math. J.}, 49(1):113--142, 2000.

\bibitem{Carrillo2003}
J.~A. Carrillo and J.~L. V\'{a}zquez.
\newblock Fine asymptotics for fast diffusion equations.
\newblock {\em Comm. Partial Differential Equations}, 28(5-6):1023--1056, 2003.

\bibitem{sph-Poin-constant-book}
I.~Chavel.
\newblock {\em Eigenvalues in {R}iemannian geometry}, volume 115 of {\em Pure
  and Applied Mathematics}.
\newblock Academic Press, Inc., Orlando, FL, 1984.
\newblock Including a chapter by Burton Randol, With an appendix by Jozef
  Dodziuk.

\bibitem{ChZa}
I.~Chlebicka and A.~Zatorska-Goldstein.
\newblock Existence to nonlinear parabolic problems with unbounded weights.
\newblock {\em J. Evol. Equ.}, 19(1):1--19, 2019.

\bibitem{Chua}
S.-K. Chua.
\newblock On weighted {S}obolev interpolation inequalities.
\newblock {\em Proc. Amer. Math. Soc.}, 121(2):441--449, 1994.

\bibitem{DA2}
L.~D'Ambrosio.
\newblock Hardy inequalities related to {G}rushin type operators.
\newblock {\em Proc. Amer. Math. Soc.}, 132(3):725--734, 2004.

\bibitem{DA}
L.~D'Ambrosio.
\newblock Hardy-type inequalities related to degenerate elliptic differential
  operators.
\newblock {\em Ann. Sc. Norm. Super. Pisa Cl. Sci. (5)}, 4(3):451--486, 2005.

\bibitem{DA3}
L.~D'Ambrosio and S.~Dipierro.
\newblock Hardy inequalities on {R}iemannian manifolds and applications.
\newblock {\em Ann. Inst. H. Poincar\'{e} Anal. Non Lin\'{e}aire},
  31(3):449--475, 2014.

\bibitem{delpino2002a}
M.~Del~Pino and J.~Dolbeault.
\newblock Best constants for {G}agliardo-{N}irenberg inequalities and
  applications to nonlinear diffusions.
\newblock {\em J. Math. Pures Appl. (9)}, 81(9):847--875, 2002.

\bibitem{DelPino2002}
M.~Del~Pino and J.~Dolbeault.
\newblock Nonlinear diffusions and optimal constants in {S}obolev type
  inequalities: asymptotic behaviour of equations involving the
  {$p$}-{L}aplacian.
\newblock {\em C. R. Math. Acad. Sci. Paris}, 334(5):365--370, 2002.

\bibitem{DelPino2003}
M.~Del~Pino and J.~Dolbeault.
\newblock Asymptotic behavior of nonlinear diffusions.
\newblock {\em Mathematical Research Letters}, 10(4):551--557, 2003.

\bibitem{Denzler2015}
J.~Denzler, H.~Koch, and R.~J. McCann.
\newblock Higher-order time asymptotics of fast diffusion in {E}uclidean space:
  a dynamical systems approach.
\newblock {\em Memoirs of the American Mathematical Society}, 234(1101):vi+81,
  2015.

\bibitem{Denzler2005}
J.~Denzler and R.~J. McCann.
\newblock Fast diffusion to self-similarity: complete spectrum, long-time
  asymptotics, and numerology.
\newblock {\em Arch. Ration. Mech. Anal.}, 175(3):301--342, 2005.

\bibitem{DiBenedetto}
E.~{DiBenedetto}.
\newblock {\em Degenerate parabolic equations}.
\newblock New York, NY: Springer-Verlag, 1993.

\bibitem{DGV-Book}
E.~DiBenedetto, U.~Gianazza, and V.~Vespri.
\newblock {\em Harnack{\textquotesingle}s Inequality for Degenerate and
  Singular Parabolic Equations}.
\newblock Springer New York, 2012.

\bibitem{dolbeault2021functional}
J.~Dolbeault.
\newblock Functional inequalities: nonlinear flows and entropy methods as a
  tool for obtaining sharp and constructive results,
  \texttt{https://arxiv.org/abs/2107.08219}, 2021.

\bibitem{Dolbeault2011}
J.~Dolbeault and G.~Toscani.
\newblock Fast diffusion equations: matching large time asymptotics by relative
  entropy methods.
\newblock {\em Kinetic and Related Models}, 4(3):701--716, 2011.

\bibitem{Dolbeault2013}
J.~Dolbeault and G.~Toscani.
\newblock Improved interpolation inequalities, relative entropy and fast
  diffusion equations.
\newblock {\em Ann. Inst. H. Poincar\'{e} Anal. Non Lin\'{e}aire},
  30(5):917--934, 2013.

\bibitem{Duzgun19}
F.~G. D\"{u}zg\"{u}n, S.~Mosconi, and V.~Vespri.
\newblock Harnack and pointwise estimates for degenerate or singular parabolic
  equations.
\newblock In {\em Contemporary research in elliptic {PDE}s and related topics},
  volume~33 of {\em Springer INdAM Ser.}, pages 301--368. Springer, Cham, 2019.

\bibitem{fornaro21}
S.~Fornaro, E.~Henriques, and V.~Vespri.
\newblock Regularity results for a class of doubly nonlinear very singular
  parabolic equations.
\newblock {\em Nonlinear Anal.}, 205:112213, 30, 2021.

\bibitem{FSV2015}
S.~Fornaro, M.~Sosio, and V.~Vespri.
\newblock Harnack type inequalities for some doubly nonlinear singular
  parabolic equations.
\newblock {\em Discrete Contin. Dyn. Syst.}, 35(12):5909--5926, 2015.

\bibitem{GaPa}
J.~P. Garc\'{\i}a~Azorero and I.~Peral~Alonso.
\newblock Hardy inequalities and some critical elliptic and parabolic problems.
\newblock {\em J. Differential Equations}, 144(2):441--476, 1998.

\bibitem{Ghoussoub2013}
N.~Ghoussoub.
\newblock {\em Functional inequalities : new perspectives and new
  applications}.
\newblock American Mathematical Society, Providence, Rhode Island, 2013.

\bibitem{GM}
N.~Ghoussoub and A.~Moradifam.
\newblock Bessel pairs and optimal {H}ardy and {H}ardy-{R}ellich inequalities.
\newblock {\em Math. Ann.}, 349(1):1--57, 2011.

\bibitem{GuW}
C.~E. Guti\'{e}rrez and R.~L. Wheeden.
\newblock Sobolev interpolation inequalities with weights.
\newblock {\em Trans. Amer. Math. Soc.}, 323(1):263--281, 1991.

\bibitem{hebey}
E.~Hebey.
\newblock {\em Nonlinear analysis on manifolds: {S}obolev spaces and
  inequalities}, volume~5 of {\em Courant Lecture Notes in Mathematics}.
\newblock New York University, Courant Institute of Mathematical Sciences, New
  York; American Mathematical Society, Providence, RI, 1999.

\bibitem{Huang2021}
X.~Huang and D.~Ye.
\newblock First order {H}ardy inequalities revisited,
  \texttt{http\://arxiv.org/pdf/2109.05471v1:PDF}.
\newblock 2021.

\bibitem{hutter97}
K.~Hutter.
\newblock Mathematical foundation of ice sheet and ice shelf dynamics. {A}
  physicist's view.
\newblock In {\em Free boundary problems: theory and applications ({C}rete,
  1997)}, volume 409 of {\em Chapman \& Hall/CRC Res. Notes Math.}, pages
  192--203. Chapman \& Hall/CRC, Boca Raton, FL, 1999.

\bibitem{IMY1994}
A.~V. Ivanov, P.~Z. Mkrtychyan, and V.~Yaeger.
\newblock Existence and uniqueness of a regular solution of the first
  initial-boundary value problem for a class of doubly nonlinear parabolic
  equations.
\newblock {\em Zap. Nauchn. Sem. S.-Peterburg. Otdel. Mat. Inst. Steklov.
  (POMI)}, 213(Kraev. Zadachi Mat. Fiz. Smezh. Voprosy Teor. Funktsi\u{\i}.
  25):48--65, 224--225, 1994.

\bibitem{KPP}
A.~Ka\l{}amajska and K.~Pietruska-Pa\l{}uba.
\newblock On a variant of the {G}agliardo-{N}irenberg inequality deduced from
  the {H}ardy inequality.
\newblock {\em Bull. Pol. Acad. Sci. Math.}, 59(2):133--149, 2011.

\bibitem{Kalashnikov1987}
A.~S. Kalashnikov.
\newblock Some problems of the qualitative theory of non-linear degenerate
  second-order parabolic equations.
\newblock {\em Russian Mathematical Surveys}, 42(2):169--222, 1987.

\bibitem{Kim2006}
Y.~J. Kim and R.~J. McCann.
\newblock Potential theory and optimal convergence rates in fast nonlinear
  diffusion.
\newblock {\em J. Math. Pures Appl. (9)}, 86(1):42--67, 2006.

\bibitem{lad69}
O.~A. Ladyzhenskaya.
\newblock {\em The mathematical theory of viscous incompressible flow}.
\newblock Mathematics and its Applications, Vol. 2. Gordon and Breach, Science
  Publishers, New York-London-Paris, 1969.
\newblock Second English edition, revised and enlarged, Translated from the
  Russian by Richard A. Silverman and John Chu.

\bibitem{Li2001}
J.~Li.
\newblock Cauchy problem and initial trace for a doubly degenerate parabolic
  equation with strongly nonlinear sources.
\newblock {\em J. Math. Anal. Appl.}, 264(1):49--67, 2001.

\bibitem{McCann2006}
R.~J. McCann and D.~Slep\v{c}ev.
\newblock Second-order asymptotics for the fast-diffusion equation.
\newblock {\em International Mathematics Research Notices}, pages Art. ID
  24947, 22, 2006.

\bibitem{Miclo}
L.~Miclo.
\newblock Quand est-ce que des bornes de {H}ardy permettent de calculer une
  constante de {P}oincar\'{e} exacte sur la droite?
\newblock {\em Ann. Fac. Sci. Toulouse Math. (6)}, 17(1):121--192, 2008.

\bibitem{PoMi}
E.~Mitidieri and S.~I. Pokhozhaev.
\newblock Absence of positive solutions for quasilinear elliptic problems in
  {${\bf R}^N$}.
\newblock {\em Tr. Mat. Inst. Steklova}, 227(Issled. po Teor. Differ. Funkts.
  Mnogikh Perem. i ee Prilozh. 18):192--222, 1999.

\bibitem{M}
B.~Muckenhoupt.
\newblock Hardy's inequality with weights.
\newblock {\em Studia Math.}, 44:31--38, 1972.

\bibitem{Otto2001}
F.~Otto.
\newblock The geometry of dissipative evolution equations: the porous medium
  equation.
\newblock {\em Comm. Partial Differential Equations}, 26(1-2):101--174, 2001.

\bibitem{S2020}
L.~Sch\"{a}tzler.
\newblock The obstacle problem for singular doubly nonlinear equations of
  porous medium type.
\newblock {\em Atti Accad. Naz. Lincei Rend. Lincei Mat. Appl.},
  31(3):503--548, 2020.

\bibitem{S1}
I.~Skrzypczak.
\newblock Hardy-type inequalities derived from {$p$}-harmonic problems.
\newblock {\em Nonlinear Anal.}, 93:30--50, 2013.

\bibitem{S2}
I.~Skrzypczak.
\newblock Hardy-{P}oincar\'{e} type inequalities derived from {$p$}-harmonic
  problems.
\newblock In {\em Calculus of variations and {PDE}s}, volume 101 of {\em Banach
  Center Publ.}, pages 225--238. Polish Acad. Sci. Inst. Math., Warsaw, 2014.

\bibitem{JLVSmoothing}
J.~L. V\'{a}zquez.
\newblock {\em Smoothing and decay estimates for nonlinear diffusion equations,
  \rm{Equations of porous medium type}}, volume~33 of {\em Oxford Lecture
  Series in Mathematics and its Applications}.
\newblock Oxford University Press, Oxford, 2006.

\bibitem{Vazquez2007}
J.~L. V\'{a}zquez.
\newblock {\em The porous medium equation, \rm{Mathematical theory}}.
\newblock Oxford Mathematical Monographs. The Clarendon Press, Oxford
  University Press, Oxford, 2007.

\bibitem{Vaz17}
J.~L. V{\'{a}}zquez.
\newblock The mathematical theories of diffusion: Nonlinear and fractional
  diffusion.
\newblock In {\em Nonlocal and Nonlinear Diffusions and Interactions: New
  Methods and Directions}, pages 205--278. Springer International Publishing,
  2017.

\bibitem{VaZu}
J.~L. V\'{a}zquez and E.~Zuazua.
\newblock The {H}ardy inequality and the asymptotic behaviour of the heat
  equation with an inverse-square potential.
\newblock {\em J. Funct. Anal.}, 173(1):103--153, 2000.

\bibitem{Vespri2020}
V.~Vespri and M.~Vestberg.
\newblock An extensive study of the regularity of solutions to doubly singular
  equations.
\newblock {\em Advances in Calculus of Variations}, 2021.

\bibitem{Zhuoqun2001}
Z.~{Wu}, J.~{Zhao}, J.~{Yin}, and H.~{Li}.
\newblock {\em Nonlinear diffusion equations}.
\newblock Singapore: World Scientific, 2001.

\end{thebibliography}

\end{document}